\documentclass{lmcs}
\pdfoutput=1

\usepackage{lastpage}
\lmcsdoi{18}{1}{18}
\lmcsheading{}{\pageref{LastPage}}{}{}%
{Jul.~13,~2021}{Jan.~19,~2022}{}

\keywords{homotopy type theory, free groups}
\usepackage[utf8]{inputenc}
\usepackage{amsmath,amsthm,amssymb}

\usepackage[all]{xy}
\usepackage{tikz-cd}

\newcommand{\typef}{\mathbf}
\newcommand{\hset}{\typef{hSet}}
\newcommand{\types}{\typef{Type}}
\newcommand{\termf}{\mathtt}
\newcommand{\inl}{\termf{inl}}
\newcommand{\inr}{\termf{inr}}
\newcommand{\base}{\termf{base}}
\newcommand{\fgloop}{\termf{loop}}

\newcommand{\coe}{\termf{idtoeqv}}

\newcommand{\ZZ}{\mathbb{Z}}

\begin{document}
\title[On the Nielsen--Schreier Theorem in Homotopy Type
  Theory]{On the Nielsen--Schreier Theorem in Homotopy Type
  Theory}

\author[A.W. Swan]{Andrew W Swan}
\address{Department of Philosophy \\ Carnegie Mellon University \\
  Pittsburgh}
\email{\texttt{andrewsw@andrew.cmu.edu}}
\thanks{I gratefully acknowledge the support of the Air Force Office of
Scientific Research through MURI grant FA9550-15-1-0053. Any opinions,
findings and conclusions or recommendations expressed in this material
are those of the authors and do not necessarily reflect the views of
the AFOSR.}

\begin{abstract}
  We give a formulation of the Nielsen--Schreier theorem (subgroups of
  free groups are free) in homotopy type theory using the presentation
  of groups as pointed connected 1-truncated types. We show the
  special case of finite index subgroups holds constructively and the
  full theorem follows from the axiom of choice. We give an example of
  a boolean $\infty$-topos where our formulation of the theorem does
  not hold and show a stronger ``untruncated'' version of the theorem
  is provably false in homotopy type theory.
\end{abstract}

\maketitle

\nocite{schreier}

\nocite{higginscatandgpd} 
\nocite{hottbook}

\section{Introduction}
\label{sec:introduction}

The statement of the Nielsen--Schreier theorem sounds very simple at
first: subgroups of free groups are themselves free. However direct
proofs are known to be surprisingly intricate and difficult. This was
the case for the original proofs by Nielsen \cite{nielsen}, for
finitely generated free groups, and Schreier \cite{schreier},
generalising to all free groups.

However, later on much clearer proofs were developed based on ideas
from topology, the
first by Baer and Levi \cite{baerlevi}.\footnote{There was also a
  slightly earlier proof by Chevalley and Herbrand \cite{chevalleyherbrand}
  along similar lines using Riemann surfaces.}
The idea essentially is that
free groups are precisely the fundamental groups of bouquets of
circles. Any subgroup is then the fundamental group of a covering
space of a bouquet of circles. However, any covering space is
homotopic to the geometric realisation of a graph, so the problem is
reduced to showing that the fundamental groups of graphs are free
groups. This is proved by constructing a spanning tree of the graph,
which is then contracted down to point, leaving the remaining edges
outside the spanning tree as edges from that point to itself, showing
that the graph is homotopy equivalent to a bouquet of circles.

This use of ideas from topology makes the Nielsen--Schreier theorem a
natural candidate for formalisation in homotopy type theory
\cite{hottbook}.  In homotopy type theory we can study spaces from a
synthetic point of view, allowing us to use much simpler definitions
that are easier to deal with in formalisations, while still being
guided by the same topological intuitions. We will give a new proof of
the Nielsen--Schreier theorem making essential use of types with non
trivial higher structure, higher inductive types and univalence,
providing an interesting example of a proof using these ideas of a
result that is often stated in a concrete purely algebraic way.

L\"{a}uchli showed in \cite{lauchli} that the use of some form of the
axiom of choice is strictly necessary for the Nielsen--Schreier
theorem, by proving that it fails in a Fraenkel-Mostowski model of
$\mathbf{ZFA}$.\footnote{Later on Howard \cite{howardnsac} and
  Kleppmann \cite{kleppmann15} gave stronger results that further
  clarify the precise relationship between the Nielsen--Schreier
  theorem and the axiom of choice.}
We will show how this result manifests in homotopy type theory by
giving an example of a boolean $\infty$-topos where it is false,
together with a stronger ``untruncated'' version that is provably
false in homotopy type theory.

We will assume throughout that the reader is familiar with standard
ideas in homotopy type theory such as transport, hlevel, higher
inductive types including truncation, connectedness and
univalence. See \cite{hottbook} for all of these concepts.

\subsection*{Agda Formalisation}
\label{sec:agda-formalisation}

The finite index case (Theorem~\ref{thm:nsfiniteindex}) has been
verified electronically using the Agda proof assistant and the
HoTT-Agda library \cite{hottagda}. It is available at
\url{https://github.com/awswan/nielsenschreier-hott}.

\subsection*{Acknowledgements}
\label{sec:acknowledgements}

I am grateful to Mathieu Anel, Steve Awodey, Thierry Coquand and Jonas
Frey for helpful discussion and suggestions. I would also like to
thank the anonymous referees for their helpful suggestions.

\section{Group Theory and Higher Group Theory in HoTT}
\label{sec:group-theory-higher}

In homotopy type theory we can give an alternative definition of group
based on the idea of thinking of a group as the fundamental group of
some space:

\begin{defi}
  A \emph{group} is a pointed type $(BG, \base)$ such that $BG$ is
  $1$-truncated and connected. A group homomorphism $(BG, \base_G) \to
  (BH, \base_H)$ is a function $f : BG \to BH$ together with a proof
  of $f(\base_G) = \base_H$.
\end{defi}

One can show that there is an exact correspondence between groups in
the above sense, and the more usual definition of group as a set with
binary operation satisfying axioms. Given a group $(BG, \base)$ as
above, we define $G$ to be the loop space $\Omega(BG, \base) := \base =
\base$.
This has a
binary operation $\cdot$ given by composition of paths. As shown by
Licata and Finster \cite{licatafinster},
every group in the usual sense is isomorphic,
and so by univalence, equal to such a
loop space using the higher inductive types of Eilenberg-MacLane
spaces.

As shown by Buchholtz, Van Doorn and Rijke
\cite{buchholtzvdoornrijke}, one of the advantages of this approach is
that it easily generalises to higher dimensions. In particular we can
define $\infty$-groups using a slightly simpler definition:
\begin{defi}
  An \emph{$\infty$-group} is a pointed type $(BG, \base)$ such that $BG$ is
  connected.
\end{defi}

We can understand subgroups in this setting using the notion of
\emph{covering space} ~\cite{favoniacoveringsps}.

\begin{defi}
  Let $(BG, \base)$ be a group. A \emph{covering space} on $BG$ is a function
  $BG \to \hset$.

  A \emph{pointed covering space} is a covering space $X : BG \to
  \hset$ together with an element of $X(\base)$.

  We say a covering space $X : BG \to \hset$ is \emph{connected} if the
  total space $\sum_{z : BG} X z$ is connected.

  We say a covering space $X : B G \to \hset$ has \emph{index} $I$ if
  there merely exists an equivalence between $X(\base)$ and $I$. In
  particular, we say it has \emph{finite index} if $X(\base)$ is merely
  equivalent to an initial segment of $\mathbb{N}$.
\end{defi}

Pointed connected covering spaces on $B G$ correspond precisely to
subgroups of $G$ \cite[Theorem 7.1(3)]{buchholtzvdoornrijke}. We will
therefore sometimes refer to them simply as subgroups.

Free groups in this setting were studied by Kraus and Altenkirch
\cite{krausfreehighergps}. We recall some of their results below.

We first define the free higher group $B F_A^\infty$ as the higher
inductive type generated by the following constructors:
\begin{enumerate}
\item $B F_A^\infty$ contains a point $\base$.
\item For each $a : A$ we add a path $\fgloop(a)$ from $\base$ to
  $\base$.
\end{enumerate}

If $A$ is a set with decidable equality then $B F_A$ is $1$-truncated,
and so a group, as defined above. It is currently an open problem
whether this can be proved constructively for sets $A$ in general. We
therefore define the free group on $A$ to be the $1$-truncation
$B F_A := \| B F_A^\infty \|_1$.

\newcommand{\ap}{\termf{ap}}

We can equivalently characterise the free group on $A$ using
any of the following descriptions.
\begin{enumerate}
\item The $1$-truncation of the wedge product of $A$ copies of
  $(\mathbb{S}^1, \base)$.
\item The coproduct of $A$ copies of the group $\mathbb{Z}
  :=(\mathbb{S}^1, \base)$
  in the category of groups and group homomorphisms.
\item The $1$-truncation of the coequalizer of the graph
  $A \rightrightarrows 1$.
\item The unique group $(B F_A, \base)$ equipped with a map
  $\fgloop : A \to \Omega(B F_A, \base_{F_A})$ satisfying the universal property
  that for any group $(BG, \base_G)$ and any map
  $g: A \to \Omega(BG, \base_G)$, there is a unique homomorphism
  $h : (B F_A, \base_{F_A}) \to (BG, \base_G)$ such that for all $a : A$,
  $\ap_h(\fgloop(a)) = g(a)$.
\end{enumerate}

\section{Coequalizers in HoTT}
\label{sec:hits}

In this section we review the definition of coequalizers in homotopy
type theory and show some useful lemmas. We will omit some of the
formal details. See the Agda formalisation for complete
proofs.\footnote{These lemmas are in the directory {\tt main/Coequalizers}.}

\newcommand{\edge}{\termf{edge}}

\begin{defi}
  A \emph{graph} consists of two types $V$ and $E$ together with two
  maps $\pi_0, \pi_1 : E \rightrightarrows V$. We will refer to
  elements of $V$ as \emph{vertices} and elements of $E$ as
  \emph{edges}.
\end{defi}

\begin{defi}
  Suppose we are given a graph $\pi_0, \pi_1 : E \rightrightarrows
  V$. The \emph{coequalizer} of $(V, E, \pi_0, \pi_1)$, denoted
  $V / E$ when $\pi_0$ and $\pi_1$ are clear from the context, is the
  higher inductive type generated by the following constructors.
  \begin{enumerate}
  \item For every $v : V$, $V / E$ contains a point $[ v ]$.
  \item For every $e : E$ there is a path
    $\edge : [ \pi_0(e) ] = [ \pi_1(e) ]$ in $V / E$.
  \end{enumerate}
\end{defi}

We will use the following three key lemmas about coequalizers in the
proof.

In the first lemma we are given a graph where the type of edges is a
coproduct of two types $E_0$ and $E_1$. We show that the coequalizer
$V / E_0 + E_1$ can be computed in two steps, first quotienting by
$E_0$, and then by $E_1$. We can visualise this as follows. Suppose we
are given a topological space $V$, and produce a new space by gluing
on a set of intervals indexed by $E$. Then we obtain the same space by
first gluing on half of the intervals, and then separately gluing on
the other half.

\begin{lem}
  \label{lem:coeqcoprod}
  Suppose we are given types $E_0, E_1, V$ together with a pair of
  maps $\pi_0, \pi_1 : E_0 + E_1 \rightrightarrows V$. By composing
  with the coproduct inclusion we get a diagram $E_0 \rightrightarrows V$,
  and so a type $V / E_0$ given by coequalizer. We then obtain
  a pair of maps $E_1 \rightrightarrows V / E_0$ by composing with the
  other coproduct inclusion and the map $[-] : V \to V / E_0$. We then
  have the equivalence below.
  \begin{equation*}
    V / (E_0 + E_1) \simeq (V / E_0) / E_1
  \end{equation*}
\end{lem}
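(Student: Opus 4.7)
The plan is to construct maps in both directions using the recursion principles for the coequalizers involved, and then prove they are mutually inverse by induction. Write $\edge_{01}$, $\edge_0$, $\edge_1$ for the edge constructors of $V/(E_0+E_1)$, $V/E_0$ and $(V/E_0)/E_1$ respectively.

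First I would construct $\varphi : V/(E_0+E_1) \to (V/E_0)/E_1$ by coequalizer recursion. On point constructors, send $[v]$ to $[[v]]$. For the edge case, I case-split on $e : E_0 + E_1$: if $e = \inl(e_0)$, send $\edge_{01}(\inl(e_0))$ to $\ap_{[-]}(\edge_0(e_0))$, which is a path $[[\pi_0(e_0)]] = [[\pi_1(e_0)]]$ in $(V/E_0)/E_1$; if $e = \inr(e_1)$, send $\edge_{01}(\inr(e_1))$ to $\edge_1(e_1)$. For the inverse direction $\psi : (V/E_0)/E_1 \to V/(E_0+E_1)$, I first build $\psi_0 : V/E_0 \to V/(E_0+E_1)$ by sending $[v]$ to $[v]$ and $\edge_0(e_0)$ to $\edge_{01}(\inl(e_0))$, and then define $\psi$ on the outer coequalizer by sending $[x]$ to $\psi_0(x)$ and $\edge_1(e_1)$ to $\edge_{01}(\inr(e_1))$.

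Next I would verify $\psi \circ \varphi \sim \mathrm{id}_{V/(E_0+E_1)}$ by coequalizer induction. On points this is definitional: $\psi(\varphi([v])) = \psi([[v]]) = [v]$. For the edge case, a dependent path over $\edge_{01}(e)$ between $\refl_{[\pi_0(e)]}$ and $\refl_{[\pi_1(e)]}$ reduces, after standard path-algebra, to proving $\ap_{\psi\circ\varphi}(\edge_{01}(e)) = \edge_{01}(e)$. By case analysis on $e$, both sides reduce using the computation rules for the recursors: for $e = \inl(e_0)$ we use that $\ap_{\psi_0}(\edge_0(e_0)) = \edge_{01}(\inl(e_0))$ together with functoriality of $\ap$ on composites, and for $e = \inr(e_1)$ both sides are $\edge_{01}(\inr(e_1))$ by definition. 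The other direction $\varphi \circ \psi \sim \mathrm{id}_{(V/E_0)/E_1}$ is proved by a nested coequalizer induction: first on the outer coequalizer, whose point case requires an inner induction on $V/E_0$, and whose edge case is again a reflexivity square after computing $\ap_{\varphi\circ\psi}(\edge_1(e_1)) = \edge_1(e_1)$.

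The bulk of the work is routine HIT recursor/induction bookkeeping, so the main obstacle is purely technical: correctly managing the dependent paths in the edge cases, in particular handling the $\ap$-on-composite identities that arise when the inverse map is defined in two stages. None of the steps require extra hypotheses on $V$, $E_0$, or $E_1$, and the argument is symmetric in the two sides of the coproduct, which provides a useful sanity check.
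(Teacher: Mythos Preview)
Your proposal is correct and follows exactly the approach the paper sketches: construct maps in both directions by coequalizer recursion and verify they are mutually inverse by (nested) coequalizer induction. The paper's own proof is a two-sentence outline of precisely this, so you have simply filled in the details it omits.
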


\begin{proof}
  Functions in both directions
  $f : V / (E_0 + E_1) \to (V / E_0) / E_1$ and
  $g : (V / E_0) / E_1 \to V / (E_0 + E_1)$ can be constructed by
  recursion on coequalizers. One can then show $f \circ g \sim 1$ and
  $g \circ f \sim 1$ by induction on the construction of the
  coequalizers.
\end{proof}

We visualise the second lemma as follows. We are given a space $X$
together with a point $x \in X$. We extend $X'$ to a larger space by
adding an extra path $e$ with one endpoint attached at $x$. Then $X'$
is homotopy equivalent to $X$, since we can contract the new path $e$
down to the point $x$. The analogous construction in algebraic
topology is sometimes known as ``growing a whisker.''
\begin{lem}
  \label{lem:trivextn}
  Let $X$ be a type with an element $x : X$. Define two maps
  $1 \rightrightarrows X + 1$ corresponding to the two elements
  $\inl(x)$ and $\inr(\ast)$ of $X + 1$. Then the canonical map to the
  coequalizer $f : X \hookrightarrow X + 1 \rightarrow X + 1 / 1$ is
  an equivalence.
\end{lem}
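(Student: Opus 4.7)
The plan is to build quasi-inverse maps directly by coequalizer recursion and then verify the homotopies, one of which is definitional and the other requires a short induction on the coequalizer.

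Write $q : X + 1 \to (X+1)/1$ for the quotient map and let $e := \edge(\ast) : q(\inl(x)) = q(\inr(\ast))$ be the unique edge. Define $f : X \to (X+1)/1$ by $f(y) := q(\inl(y))$, which is the map we want to prove an equivalence. Define a candidate inverse $g : (X+1)/1 \to X$ by the recursion principle for the coequalizer: on points set $g(q(\inl(y))) := y$ and $g(q(\inr(\ast))) := x$, and on the edge set $\ap_g(e) := \refl_x$, which is a well-typed loop at $x$ in $X$.

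The composite $g \circ f$ is the identity on $X$ definitionally, since $g(f(y)) = g(q(\inl(y))) = y$. For the other composite I would prove $\prod_{z : (X+1)/1}\, f(g(z)) = z$ by coequalizer induction. On the point case $z = q(\inl(y))$, both sides are $q(\inl(y))$ and we take $\refl$. On the point case $z = q(\inr(\ast))$, we have $f(g(z)) = q(\inl(x))$ and $z = q(\inr(\ast))$, and the required equality is provided precisely by the edge $e$. The path case requires checking the dependent-path obligation over $e$ in the family $z \mapsto f(g(z)) = z$; unfolding this using the standard lemma for transport in a path family reduces to verifying that $\ap_{f \circ g}(e) \cdot e = \refl_{q(\inl(x))} \cdot e$, which holds because $\ap_g(e) = \refl_x$ forces $\ap_{f \circ g}(e) = \refl$.

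The main obstacle, such as it is, is the edge coherence in the final induction: one has to compute the transport of the chosen point-level equalities along $e$ in the family $z \mapsto f(g(z)) = z$ and see that it cancels to the identity. This is a purely formal computation using the characterization of transport in a path family over a path in a higher inductive type, and is the only step beyond recursion and direct substitution; everything else is either definitional or a direct appeal to the recursion principle.
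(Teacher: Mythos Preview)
Your proof is correct and follows exactly the approach the paper takes: construct the inverse $g$ by coequalizer recursion, note that $g \circ f$ is definitionally the identity, and verify $f \circ g \sim 1$ by induction on $(X+1)/1$. The paper's proof is a one-line sketch of precisely this, so your version simply fills in the details (including the edge-coherence computation) that the paper leaves implicit.
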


\begin{proof}
  We can construct an inverse $g : (X + 1) / 1 \,\to\, X$ by recursion on
  the definition of coequalizer. Note that $g \circ f$ is
  definitionally equal to $1$, and we can show $f \circ g \sim 1$ by
  induction on the definition of $(X + 1) / 1$.
\end{proof}

\begin{lem}
  \label{lem:coeqpb}
  ``Coequalizers are stable under pullback.'' Suppose we are given a
  graph $E \rightrightarrows V$ and a family of types
  $X : V / E \to \types$. Define $E' := \sum_{e : E} X([\pi_0(e)])$
  and $V' := \sum_{v : V} X([v])$. Define $\pi_0', \pi_1' : E' \to V'$
  by $\pi_0'(e, x) := (\pi_0(e), x)$ and $\pi_1'(e, x) :=
  \edge(e)_\ast(x)$
  Then $\sum_{z : V / E} X(z) \simeq V' / E'$.
\end{lem}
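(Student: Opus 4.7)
The plan is to construct quasi-inverse maps $\varphi : V'/E' \to \sum_{z : V/E} X(z)$ and $\psi : \sum_{z : V/E} X(z) \to V'/E'$ using the recursion and induction principles of the two higher inductive types, and then check the round-trip equations by induction.

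For $\varphi$ I use the coequalizer recursion principle on $V'/E'$. On vertices, I send $(v,x) \mapsto ([v], x)$. On an edge $(e,x) : E'$ the endpoints are $(\pi_0(e), x)$ and $(\pi_1(e), \edge(e)_\ast(x))$, so I need an identification $([\pi_0(e)], x) = ([\pi_1(e)], \edge(e)_\ast(x))$ in the total space $\sum_{z : V/E} X(z)$; this is supplied by pairing the base edge $\edge(e)$ with the canonical dependent path over it given by the transport itself.

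For $\psi$ I uncurry and use the dependent elimination principle of $V/E$ to build a map $\tilde\psi : \prod_{z : V/E} X(z) \to V'/E'$. On the vertex case take $\tilde\psi([v])(x) := [(v,x)]$. On the edge case, I need to show that $\tilde\psi([\pi_0(e)])$ transported along $\edge(e)$ in the fibration $z \mapsto X(z) \to V'/E'$ equals $\tilde\psi([\pi_1(e)])$. Applying function extensionality together with the transport rule for function types reduces this to producing, for each $y : X([\pi_1(e)])$, an identification $[(\pi_0(e), \edge(e)_\ast^{-1}(y))] = [(\pi_1(e)], y)]$ in $V'/E'$; this is obtained from $\edge'(e, \edge(e)_\ast^{-1}(y))$ together with the trivial identification $\edge(e)_\ast(\edge(e)_\ast^{-1}(y)) = y$. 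Setting $\psi(z,x) := \tilde\psi(z)(x)$ gives the desired map.

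Finally I verify $\varphi \circ \psi \sim \mathrm{id}$ and $\psi \circ \varphi \sim \mathrm{id}$. The first is proved by another dependent induction on the first coordinate of the sigma type: the vertex case holds definitionally, and the edge case reduces, via the computation rule for the edge case of $\tilde\psi$ and the behaviour of transport in a sigma type, to an elementary path identity. The second is a direct coequalizer induction on $V'/E'$, again definitional on vertices and a routine calculation on the edge case. The main obstacle is the edge case of $\psi$: one has to unwind the transport in the type $X(-) \to V'/E'$ along $\edge(e)$ carefully, and this is where function extensionality together with the definition of $\edge'$ are forced to cooperate; once that step is in place, all remaining verifications are mechanical applications of the computation rules for coequalizer induction.
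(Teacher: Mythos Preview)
Your proposal is correct and follows essentially the same approach as the paper: construct mutually inverse maps by recursion/induction on the two higher inductive types and verify the round-trips by induction, with your $\varphi$ matching the paper's $g$ and your $\psi$ matching the paper's $f$. If anything, your treatment of the edge case for $\psi$ is cleaner: you explicitly invoke the edge constructor $\edge'(e,\edge(e)_\ast^{-1}(y))$ in $V'/E'$ together with the cancellation $\edge(e)_\ast(\edge(e)_\ast^{-1}(y)) = y$, whereas the paper's exposition speaks of producing a path in $V'$ and then invokes the characterisation of identity types of $\Sigma$-types, which amounts to the same data once one unfolds what the edge constructor provides.
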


\begin{proof}
  This can be seen as a special case of the flattening lemma for
  coequalizers \cite[Lemma 6.12.2]{hottbook}.  However, for
  completeness we will give a direct proof.
  
  We first define a map $f : \sum_{z : V / E} X(z) \to V' /
  E'$. Equivalently we can define a dependent function $f' : \prod_{z
    : V / E} \, (X(z) \to V' / E')$. We define $f'$ using the
  elimination principle of $V / E$. Given $v : V$, we define
  $f'([v])(x)$ to be $[(v, x)]$. Given $e : E$ we need to define a path
  as below.
  \begin{equation}
    \label{eq:1}
    \edge(e)_\ast(f'([\pi_0(e)])) =
    f'(\pi_1(e))
  \end{equation}
  However, by path induction we can show that for all paths $p : z =
  z'$ in $V / E$ and all $h : X(z) \to V' / E'$ that $p_\ast (h)(x)=
  h(p^{-1}_\ast(x))$.
  
  By applying the above with $h = f'(\pi_0(e))$ and $p = \edge(e)$,
  and function extensionality, we can deduce \eqref{eq:1} by finding
  for each $e : E$ and each $x : X(\pi_1(e))$ a path of the type
  below.
  \begin{equation*}
    f'([\pi_0(e)])(\edge(e)^{-1}_\ast(x)) = f'([\pi_1(e)])(x)
  \end{equation*}
  By definition, it suffices to find a path
  $(\pi_0(e), \edge(e)^{-1}_\ast(x)) = (\pi_1(e), x)$ in
  $V'$. However, by the characterisation of identity types for
  $\sum$-types, this is the same as a path $q : \pi_0(e) = \pi_1(e)$
  together with a path as below.
  \begin{equation*}
    q_\ast( \edge(e)^{-1}_\ast(x)) = x 
  \end{equation*}
  We can of course take $q := \edge(e)$.

  We will define $g : V' / E' \to \sum_{z : V / E} X(z)$ by recursion on the
  construction of $V' / E'$. We define $g([(v, x)]) := ([v],
  x)$. Given $(e, x) : E'$, we have an evident path
  $([\pi_0(e)], x) = ([\pi_1(e)], \edge(e)_\ast(x))$, which
  gives us the well defined function $g$.

  Finally, one can verify $f \circ g \sim 1$ by induction on the
  definition of $V' / E'$, and $g \circ f \sim 1$ by induction on the
  definition of $V / E$.
\end{proof}

We will often also implicitly use the following lemma.
\begin{lem}
  Suppose we are given graphs $\tau_0, \tau_1 : D \rightrightarrows U$
  and $\pi_0, \pi_1 : E \rightrightarrows V$ together with
  equivalences $D \simeq E$ and $U \simeq V$ commuting with the
  endpoint maps. Then we have an equivalence $U / D \simeq V / E$.
\end{lem}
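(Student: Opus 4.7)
The plan is to construct the equivalence $U / D \simeq V / E$ directly using the recursion principle of coequalizers, in the same style as the previous lemmas in this section. Let $\alpha : U \to V$ and $\beta : D \to E$ denote the underlying functions of the given equivalences, so that the commutativity hypothesis supplies homotopies $\alpha \circ \tau_i \sim \pi_i \circ \beta$ for $i = 0, 1$.

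First I would define a map $f : U / D \to V / E$ by recursion on the construction of $U / D$. On point constructors we send $[u]$ to $[\alpha(u)]$. For each edge $d : D$ we must produce a path $[\alpha(\tau_0(d))] = [\alpha(\tau_1(d))]$ in $V / E$; composing the commutativity homotopies (applied to $[-] : V \to V / E$) with $\edge(\beta(d))$ yields such a path. Symmetrically, using the quasi-inverses $\alpha^{-1}$ and $\beta^{-1}$ and the homotopies induced from the original commutativity squares, I would define $g : V / E \to U / D$ sending $[v]$ to $[\alpha^{-1}(v)]$ and each $\edge(e)$ to the appropriate concatenation built from $\edge(\beta^{-1}(e))$.

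Next I would verify $f \circ g \sim \mathrm{id}$ and $g \circ f \sim \mathrm{id}$ by induction on the coequalizer. On the point cases one uses the section–retraction homotopies $\alpha \circ \alpha^{-1} \sim \mathrm{id}$ and $\alpha^{-1} \circ \alpha \sim \mathrm{id}$, applied to $[-]$. On the edge cases one must check that the paths produced by $f$ and $g$ compose to the correct edge in the target, which amounts to a compatibility between the quasi-inverse homotopies of the two equivalences and the commutativity squares.

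The main obstacle, and the only real work, is managing this edge-case coherence: one needs that the square relating $\alpha, \beta$ to the endpoint maps is preserved under taking inverses (so that $\alpha^{-1}$ and $\beta^{-1}$ also commute with the endpoint maps, up to a path determined by the original one). This is a routine but slightly fiddly diagram chase with transports that can be dispatched by path induction on the underlying homotopies, exactly as in the proofs of Lemmas~\ref{lem:trivextn} and~\ref{lem:coeqpb}. Alternatively, one could shortcut the entire argument by packaging a graph as an element of $\sum_{V : \types} \sum_{E : \types} (E \to V) \times (E \to V)$ and using univalence together with function extensionality to upgrade the hypothesised data to an identification of graphs, after which path induction delivers the equivalence of coequalizers immediately; I would mention this as an alternative but give the direct construction for continuity with the rest of the section.
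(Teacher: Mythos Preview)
Your proposal is correct, but the paper makes the opposite editorial choice: it takes as its \emph{main} proof precisely what you relegate to an alternative. The paper assumes (for convenience) all types at the same universe level, uses univalence to replace the equivalences $D \simeq E$ and $U \simeq V$ by identifications, and then proves the statement by iterated path induction on those paths and on the commutativity witnesses. There is no explicit construction of $f$ and $g$ or verification of round-trip homotopies.

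What each buys: your direct construction is more concrete and in principle does not need univalence (only function extensionality for the edge case of the coequalizer induction), keeping it in the same hands-on style as the earlier lemmas. The cost is exactly the coherence you flag: checking that the inverse squares and the round-trip edge paths match up is genuinely fiddly, and one must be a bit careful---``path induction on the underlying homotopies'' is not quite the right description, since you cannot induct on a homotopy with both endpoints fixed; what you actually do there is explicit path algebra (whiskering, naturality squares for $[-]$, and the adjoint triangle identities of the equivalences). The paper's univalence-based argument simply sidesteps all of that bookkeeping in two lines, at the price of invoking univalence and a mild universe-level caveat (handled in a footnote).
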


\begin{proof}
  For convenience we will assume that all types involved lie at the
  same universe level.\footnote{For the more general statement where the types can have different universe levels, we need an additional lemma that coequalizers are preserved by ``lifting'' to higher universe levels. See the file {\tt main/Coequalizers/PreserveEquivalence.agda} in the formalisation for details.} By univalence we may assume that $D = E$ and
  $U = V$, and that the equivalences are given by transport along
  these paths. Hence it suffices to show that for all types
  $D, E, U, V$, all paths $p : D = E$ and $q : U = V$, all maps
  $\tau_0, \tau_1 : D \rightrightarrows U$ and
  $\pi_0, \pi_1 : E \rightrightarrows V$ and finally all proofs that
  $q_\ast \circ \tau_i = \pi_i \circ p_\ast$ we have
  an equivalence $U / D \simeq V / E$. However, the preceding
  statement can be proved by iterated path induction.
\end{proof}

\section{Spanning Trees in HoTT}
\label{sec:some-basic-graph}

We think of the coequalizer of a graph $E \rightrightarrows V$ as its
\emph{geometric realisation}, the topological space that has a point
for each vertex $v : V$, and a path from $\pi_0(e)$ to $\pi_1(e)$ for
each edge $e : E$. Note that a graph is connected if and only if its
geometric realisation is connected. Similarly a graph is a tree if and
only if its geometric realisation is contractible, or equivalently if
it is both connected and $0$-truncated (contains no non trivial
cycles). We will take this topological point of view as the definition
of connected and tree.

\begin{defi}
  Let $E \rightrightarrows V$ be a graph. We say the graph is
  \emph{connected} if $E / V$ is a connected type, and we say the
  graph is a \emph{tree} if $E / V$ is contractible.
\end{defi}

\begin{defi}
  Let $\pi_0, \pi_1 : E \rightrightarrows V$ be a graph. A
  \emph{subgraph} is a graph $D \rightrightarrows U$ together with
  embeddings $h : D \hookrightarrow E$ and $k : U \hookrightarrow V$
  such that the following squares commute for $i = 0, 1$:
  \begin{equation*}
    \begin{tikzcd}
      D \ar[d, "\tau_i"] \ar[r, hook, "h"] & E \ar[d, "\pi_i"] \\
      U \ar[r, hook, "k"] & V
    \end{tikzcd}
  \end{equation*}
\end{defi}

\begin{defi}
  Let $E \rightrightarrows V$ be a graph. A \emph{spanning tree} is a
  subgraph $D \rightrightarrows U$ such that $D \rightrightarrows U$
  is a tree, the embedding $U \hookrightarrow V$ is an equivalence and
  the embedding $D \hookrightarrow E$ has decidable
  image.\footnote{It is not clear if decidable image should be part of
    the definition of spanning tree for every application. However, the
    main use of spanning trees for this paper will be
    Lemma~\ref{lem:spanningtreetofreegp} below, where it does
    play a critical role.}
\end{defi}

We will give two lemmas on the existence of spanning trees. Both
will use Lemma ~\ref{lem:extendsubgraph}, which in turn uses the lemma
below.

\begin{lem}
  \label{lem:crossingedge}
  Suppose we are given a connected graph $E \rightrightarrows V$, and
  that $V$ decomposes as a coproduct $V \simeq V_0 + V_1$. Suppose
  further that we are given an element of each component $v_0 : V_0$ and
  $v_1 : V_1$. Then there merely exists an edge $e : E$ such that either
  $\pi_0 (e) \in V_0$ and $\pi_1(e) \in V_1$, or $\pi_0(e) \in V_1$ and
  $\pi_1(e) \in V_0$.
\end{lem}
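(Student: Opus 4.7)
The strategy is to construct a set quotient of $\mathbf{2}$ which ``collapses'' exactly when the desired conclusion holds, and to use the connectedness of $V / E$ to force this collapse. Let $P$ denote the mere existence statement being claimed. Because the coproduct decomposition $V \simeq V_0 + V_1$ is decidable, for each $e : E$ we can decide whether $e$ is ``crossing'' (its endpoints lying in different summands), so $P$ is equivalent to the mere existence of a crossing edge, and from any such $e$ we directly obtain a witness of $P$.

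Next I would define a propositional equivalence relation $R$ on $\mathbf{2}$ by $R(x, y) := P \vee (x = y)$ and let $Q := \mathbf{2} / R$ be the corresponding set quotient. The point of this definition is that $[0]_Q = [1]_Q$ in $Q$ if and only if $R(0, 1)$, which simplifies to $P$ itself. I would then define a function $g : V \to Q$ sending $v \in V_i$ to $[i]_Q$ using the decomposition, and extend it to $g' : V / E \to Q$ by coequalizer elimination. For each edge $e : E$ I case-split on which summand contains each endpoint, using the decidability of the coproduct: if both endpoints lie in the same $V_i$ the required path $g(\pi_0(e)) = g(\pi_1(e))$ in $Q$ is reflexivity, while if they lie in different summands the edge $e$ itself directly witnesses $P$, which supplies a proof of $R(0, 1)$ and hence the required identification $[0]_Q = [1]_Q$ in $Q$.

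Finally, since $V / E$ is connected and $Q$ is a set (being a set quotient), the map $g'$ must be constant. Consequently $g'([v_0]) = g'([v_1])$, i.e., $[0]_Q = [1]_Q$ in $Q$, which by the characterisation of $Q$ is exactly the proposition $P$.

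The only non-routine step will be verifying the characterisation $([0]_Q = [1]_Q) \leftrightarrow P$: one checks that $R$ really is a propositional equivalence relation (reflexivity is immediate from $x = x$, and transitivity follows by distributing $\vee$ over $\wedge$) and then applies the standard fact that equality in a set quotient by a propositional equivalence relation is given exactly by that relation. Apart from that, the proof is a direct application of the decidability of the decomposition and of the coequalizer elimination principle.
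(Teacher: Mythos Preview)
Your argument is correct. The route, however, differs from the paper's main proof. The paper defines a family of propositions $\hat P : V/E \to \mathrm{Prop}$ directly by coequalizer recursion, sending vertices in $V_0$ to $\top$ and vertices in $V_1$ to the target proposition, checks well-definedness on edges by the same four-way case split you use, and then transports the trivial witness at $[v_0]$ along a (merely existing) path to $[v_1]$. You instead build an external set $Q = \mathbf{2}/R$ that collapses precisely when the conclusion $P$ holds, map $V/E$ into it, and appeal to the fact that a map from a connected type into a set is weakly constant. This is really a constructive version of the paper's \emph{alternative} non-constructive proof sketch (the $2$-colouring argument): rather than mapping to the rigid set $\mathbf{2}$ and deriving a contradiction from a hypothetical absence of crossing edges, you map to a softened quotient of $\mathbf{2}$ whose collapse \emph{is} the desired proposition. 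Both arguments hinge on the same case analysis over edges and the same use of connectedness; your packaging trades the mild bookkeeping of defining a type family on a coequalizer for the (standard) effectiveness lemma for set quotients by propositional equivalence relations.
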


\begin{proof}
  We first define a family of propositions $P : V / E \to \types$. We
  wish $P$ to satisfy the following. For $v : V_0$,
  $P([\inl(v)]) = 1$, and for $v : V_1$ we require the equation
  below.
  \begin{equation*}
    P([\inr(v)]) = \left\| \sum_{e : E} \, (\pi_0(e) \in V_0 \,\times\, \pi_1(e)
      \in V_1) \;+\; (\pi_0(e) \in V_1 \,\times\, \pi_1(e) \in V_0)
    \right\|
  \end{equation*}
  To show such a $P$ exists, we note that the requirements above
  precisely define the action on points of a recursive definition on
  $V / E$. Hence to get a well defined function it suffices to define
  an action on paths. That is, we need equalities
  $P([\pi_0(e)]) = P([\pi_1(e)])$ for $e : E$. By propositional
  extensionality we just need to show $P([\pi_0(e)])$ and
  $P([\pi_1(e)])$ are logically equivalent. However, this is
  straightforward by considering the $4$ cases depending on whether
  $\pi_i(e) \in V_0$ or $\pi_i(e) \in V_1$ for $i = 0,1$: if
  $\pi_0(e)$ and $\pi_1(e)$ lie in the same component of the
  coproduct, then $P([\pi_0(e)])$ and $P([\pi_1(e)])$ are the same by
  definition, and if they lie in different components, then $P([v])$
  is true for all $v : V$.  We now construct a map from
  $[\inl(v_0)] = z$ to $P(z)$ for each $z : V / E$. By based path
  induction it suffices to construct an element of $P(\inl(v_0))$, but
  this was defined to be $1$, so is trivial. By connectedness,
  there merely exists an element of $[\inl(v_0)] = [\inl(v_1)]$, and
  so $P(\inl(v_1))$ is inhabited and the lemma follows.
\end{proof}

To further illustrate the proof of Lemma~\ref{lem:crossingedge} we
give an alternative non constructive proof using the same
family of types $P$.

Using the law of excluded we can assume there is no edge with endpoints in
different components of $V$ and derive a contradiction. Under this
assumption, our requirement on $P$ is that it is $1$ on $V_0$ and $0$
on $V_1$. In other words we have a $2$-colouring of vertices and want
to extend it to a $2$-colouring on the whole graph. We can do this by
the assumption, since any edge has the same colour on both its
endpoints, so we can take the whole edge to be that colour. This now
contradicts connectedness, since we have a surjection from the graph
to $2$. Topologically, we can think of this as a continuous surjection
from a connected space to the discrete space $2$, which is not possible.

\begin{lem}
  \label{lem:extendsubgraph}
  Let $E \rightrightarrows V$ be a connected graph where $V$ has
  decidable equality and $E$ is a set, together with a subgraph
  $D \rightrightarrows U$ such that the inclusion $U \hookrightarrow
  V$ has decidable image.

  Suppose further that we are given elements
  $u \in U$ and $v \in V \setminus U$. Then there merely exists a
  larger subgraph whose type of vertices is $U + 1$ and whose type of
  edges is $D + 1$, such that the canonical map $U / D \to (U + 1) /
  (D + 1)$ is an equivalence, as illustrated below:
  \begin{equation*}
    \begin{tikzcd}
      D \ar[d, shift right] \ar[d, shift left] \ar[r, hook] & D + 1
      \ar[d, shift right] \ar[d, shift left] \ar[r, hook] &
      E \ar[d, shift right] \ar[d, shift left] \\
      U \ar[r, hook] \ar[d] & U + 1 \ar[r, hook] \ar[d] & V \ar[d] \\
      U / D \ar[r, "\simeq"] & (U + 1) / (D + 1) \ar[r] & V / E
    \end{tikzcd}
  \end{equation*}
\end{lem}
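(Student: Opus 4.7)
The plan is to use the decidable image of $U \hookrightarrow V$ to split $V$ into $U$ and its complement, use Lemma~\ref{lem:crossingedge} to find an edge crossing this partition, and then attach its non-$U$ endpoint as a whisker in the sense of Lemma~\ref{lem:trivextn}. Since the conclusion is a mere proposition, I can freely eliminate truncations along the way.

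First, the decidable image assumption gives an equivalence $V \simeq U + (V \setminus U)$. The given elements $u \in U$ and $v \in V \setminus U$ witness that both sides are inhabited, so Lemma~\ref{lem:crossingedge} applied to the connected graph $E \rightrightarrows V$ with this decomposition yields (mere existence of) an edge $e : E$ with one endpoint in $U$ and the other in $V \setminus U$. After a WLOG case split on which endpoint lies where, I may assume $\pi_0(e) = k(u')$ for some $u' : U$ and $\pi_1(e) = v'$ for some $v' : V \setminus U$.

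Next I extend the subgraph. The new vertex type is $U + 1$, with inclusion into $V$ given by $k$ on $\inl$ and by $v'$ on $\inr(\ast)$; the new edge type is $D + 1$, with inclusion into $E$ given by $h$ on $\inl$ and by $e$ on $\inr(\ast)$; the endpoint maps of the new edge are $\inl(u')$ and $\inr(\ast)$. To see these inclusions are genuine embeddings, note that $v' \notin k(U)$ by assumption, and if $e$ were in the image of $h$ then by commutativity of the original subgraph squares both its endpoints would lie in $k(U)$, contradicting $\pi_1(e) = v'$. The commuting squares for $i=0,1$ hold by construction.

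Finally I identify the canonical map $U/D \to (U+1)/(D+1)$ as an equivalence. By Lemma~\ref{lem:coeqcoprod},
\[ (U+1)/(D+1) \;\simeq\; ((U+1)/D)/1. \]
Because every $D$-edge has both endpoints in the $\inl$-component, a direct coequalizer recursion argument gives $(U+1)/D \simeq U/D + 1$, under which the endpoints of the single remaining edge become $[u']$ (in $U/D$) and $\inr(\ast)$. Now Lemma~\ref{lem:trivextn} with $X := U/D$ and $x := [u']$ shows $(U/D + 1)/1 \simeq U/D$ via a map that is a definitional left inverse to the whiskering map. Chasing definitions, the composite equivalence $(U+1)/(D+1) \to U/D$ is a retraction of the canonical map, so that map is itself an equivalence. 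The main obstacle I anticipate is the bookkeeping in this last step, namely verifying that the equivalence assembled from Lemmas~\ref{lem:coeqcoprod} and \ref{lem:trivextn} really coincides with the one induced functorially by the subgraph inclusions $D \hookrightarrow D+1$ and $U \hookrightarrow U+1$; the mathematical content otherwise reduces cleanly to the earlier lemmas.
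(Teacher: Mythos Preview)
Your proof is correct and follows the same overall strategy as the paper: split $V$ via the decidable image, invoke Lemma~\ref{lem:crossingedge} to obtain a crossing edge, extend the subgraph by that edge and its outer endpoint, and then compute the coequalizer using Lemmas~\ref{lem:coeqcoprod} and~\ref{lem:trivextn}. The one difference is the order in which you apply Lemma~\ref{lem:coeqcoprod}: you quotient by $D$ first and then by the new edge, which forces the auxiliary observation $(U+1)/D \simeq U/D + 1$ before you can invoke Lemma~\ref{lem:trivextn}. The paper instead quotients by the single new edge first, so that Lemma~\ref{lem:trivextn} applies immediately to give $(U+1)/1 \simeq U$, and then quotients by $D$ to obtain $U/D$ directly; this sidesteps both your extra coequalizer computation and most of the bookkeeping you flag at the end.
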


\begin{proof}
  Since the inclusion $U \hookrightarrow V$ is decidable, we can write
  $V$ as the coproduct of $U$ with its complement
  $V \simeq U + V \setminus U$. Applying Lemma~\ref{lem:crossingedge}
  shows there merely exists an edge $e : E$ such that either $\pi_0(e)
  \in U$ and $\pi_1(e) \notin U$ or vice versa. We consider the former
  case, the latter being similar.

  We define the map $U + 1 \hookrightarrow V$ to be the same as the
  map $U \hookrightarrow V$ on the $U$ component and to be equal to
  $\pi_1(e)$ on the $1$ component. Since $\pi_1(e) \notin U$, this is
  an embedding. Similarly we define the map $D + 1 \hookrightarrow E$
  by taking the $1$ component to $e$.  Note that we cannot have
  $e \in D$, since this would imply $\pi_1(e) \in U$, and so the map
  $D + 1 \hookrightarrow E$ is also an embedding.

  We define the endpoint maps $D + 1 \rightrightarrows U + 1$ as
  appropriate to satisfy commutativity conditions.

  Finally, we verify the equivalence by computing as follows.
  \begin{align*}
    (U + 1)/(D + 1) &\simeq ((U + 1) / 1) / D
    & \text{Lemma \ref{lem:coeqcoprod}} \\
                    &\simeq U / D & \text{Lemma \ref{lem:trivextn}} & \qedhere
  \end{align*}
\end{proof}

\begin{lem}
  \label{lem:spanningtreefinite}
  Let $E \rightrightarrows V$ be a connected graph where $V$ is
  finite, say with $|V| = n$ and $E$ is a set with decidable
  equality. Then the graph merely has a spanning tree
  $D \hookrightarrow E \rightrightarrows V$ where $D$ is finite with
  $|D| = n - 1$.
\end{lem}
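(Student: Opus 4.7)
The plan is to prove the existence of the spanning tree by induction on $n = |V|$, building up the tree one vertex and one edge at a time using Lemma~\ref{lem:extendsubgraph}. Since the conclusion is a mere proposition (it is truncated), we are free to use truncation elimination at each step to extract actual vertices and edges.

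More precisely, I would prove by induction on $k$ with $1 \leq k \leq n$ that there merely exists a subgraph $D_k \rightrightarrows U_k$ of $E \rightrightarrows V$ such that $|U_k| = k$, $|D_k| = k - 1$, the coequalizer $U_k / D_k$ is contractible (so the subgraph is a tree), and the inclusions $U_k \hookrightarrow V$ and $D_k \hookrightarrow E$ have decidable image. For the base case $k = 1$, connectedness of the graph merely gives us a vertex $v_0 : V$ (since $V$ is finite and non-empty when $n \geq 1$), and the subgraph consisting of the single vertex $v_0$ with no edges is trivially a tree, and the inclusions are clearly decidable since $V$ and $E$ have decidable equality.

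For the inductive step, suppose $k < n$ and we have a subtree $D_k \rightrightarrows U_k$ as above. Since $|U_k| = k < n = |V|$, and since $U_k \hookrightarrow V$ has decidable image in a finite type, $V \setminus U_k$ is non-empty, so we merely have some $v \in V \setminus U_k$. We also merely have some $u \in U_k$ (pick the existing base vertex). Applying Lemma~\ref{lem:extendsubgraph}, we merely obtain an extended subgraph $D_k + 1 \rightrightarrows U_k + 1$ with the canonical map $U_k / D_k \to (U_k + 1) / (D_k + 1)$ an equivalence; since the former is contractible, so is the latter, so the extended subgraph is a tree. Its vertex count is $k + 1$ and edge count is $k$, and both inclusions remain embeddings with decidable image (their images are finite subsets of types with decidable equality).

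At $k = n$ we obtain a subgraph $D \rightrightarrows U$ with $|U| = n = |V|$ and $|D| = n - 1$, whose coequalizer is contractible. The inclusion $U \hookrightarrow V$ is then an embedding between finite sets of the same size, hence an equivalence, so $D \rightrightarrows U$ is a spanning tree with the required properties. The main obstacle I anticipate is book-keeping rather than any deep difficulty: one must verify at each stage that finiteness with decidable equality is preserved by the coproduct extension, and that the resulting inclusions still have decidable image in $V$ and $E$ respectively so that Lemma~\ref{lem:extendsubgraph} can be applied again in the next step. Since Lemma~\ref{lem:extendsubgraph} only gives mere existence, we must ensure the induction hypothesis is itself propositional, which it is because $U_k$ and $D_k$ are uniquely determined up to equivalence by the propositional data we track.
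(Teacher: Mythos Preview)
Your proposal is correct and follows essentially the same route as the paper: induction on $k$ from $1$ to $n$, building a contractible subgraph $D_k \rightrightarrows U_k$ with $|U_k| = k$ and $|D_k| = k - 1$ by repeatedly applying Lemma~\ref{lem:extendsubgraph}, then observing at $k = n$ that the vertex inclusion is an equivalence and the edge inclusion has decidable image. The only minor wobble is your final sentence: the induction hypothesis is a proposition simply because it is a propositional truncation (``there merely exists''), not because $U_k$ and $D_k$ are in any sense uniquely determined---they are not.
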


\begin{proof}
  We show by induction that for $1 \leq k \leq n$ there merely exists a
  subgraph $D \rightrightarrows U$ such that $|U| = k$, $|D| = k - 1$
  and $U / D$ is contractible.

  For $k = 1$ we observe that by the definition of connectedness, $V /
  E$ is merely inhabited, and so $V$ is also merely inhabited. 
  An element $v$ of $V$ defines an embedding $1 \hookrightarrow V$,
  and it is clear that the coequalizer $1 / 0$ is contractible.

  Now suppose we have already defined a suitable subgraph for
  $1 \leq k < n$, say $D \rightrightarrows U$. Since $V$ is finite, it
  in particular has decidable equality. Furthermore, since $|U| = k$
  with $1 \leq k < n$, there exist $u \in U$ and $v \notin U$. Hence
  we can apply Lemma~\ref{lem:extendsubgraph} to show the existence of
  a subgraph of the form $D + 1 \rightrightarrows U + 1$ where
  $(U + 1) / (D + 1) \simeq U / D$. Since $U / D$ is contractible, so is
  $(U + 1) / (D + 1)$, as required.

  Now we apply the above with $k = n$ to get a subgraph
  $D \rightrightarrows U$ where $U / D$ is contractible. Since $U$ and
  $V$ are both finite of the same size $n$, the embedding $U
  \hookrightarrow V$ is an equivalence. Since $E$ has decidable
  equality and $D$ is finite, the embedding $D \hookrightarrow E$ has
  decidable image. Hence this does indeed give a subtree.
\end{proof}

\begin{lem}
  \label{lem:spanningtreezl}
  Let $E \rightrightarrows V$ be a connected graph where $E$ and $V$
  are both sets. Suppose that the axiom of choice holds. Then a
  spanning tree for the graph merely exists.
\end{lem}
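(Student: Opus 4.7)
The plan is to use Zorn's lemma, which is available from the axiom of choice, to produce a maximal subtree, and then to invoke Lemma~\ref{lem:extendsubgraph} to show that any maximal subtree must already be spanning. Since the axiom of choice implies the law of excluded middle by Diaconescu's theorem, every set has decidable equality and every embedding into a set has decidable image; in particular the decidable-image condition in the definition of spanning tree is automatic as soon as we produce a subtree whose vertex embedding is an equivalence.

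I would first consider the poset $\mathcal{T}$ whose elements are subgraphs $D \rightrightarrows U$ of $E \rightrightarrows V$ for which the coequalizer $U / D$ is contractible, ordered by inclusion. Because $E$ and $V$ are sets, subgraphs correspond to pairs of proposition-valued subsets that are closed under the endpoint maps, so $\mathcal{T}$ is itself a set. The poset is nonempty: connectedness forces $V$ to be merely inhabited, and any single vertex yields a trivial subtree of the form $0 \rightrightarrows 1$. To apply Zorn's lemma I would verify that every chain in $\mathcal{T}$ has an upper bound, given by the pointwise union of the vertex and edge subsets. The nontrivial content here is that this union is again a tree, i.e.\ that a filtered colimit along embeddings of contractible coequalizers is contractible.

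Having applied Zorn's lemma, I would obtain a maximal subtree $D \rightrightarrows U$. If the embedding $U \hookrightarrow V$ were not an equivalence, LEM would supply some $v \in V \setminus U$, and, combined with the merely inhabited set $U$ (which contains the base vertex used above), Lemma~\ref{lem:extendsubgraph} would produce a strictly larger subtree $D + 1 \rightrightarrows U + 1$ inside $E \rightrightarrows V$, contradicting maximality. Hence $U \hookrightarrow V$ is an equivalence. Decidable image of $D \hookrightarrow E$ then comes for free from LEM, and we have a spanning tree.

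The hard part is the chain step: showing that a filtered union of subtrees is again a subtree. This reduces to the general HoTT fact that filtered colimits along embeddings preserve contractibility of coequalizers, which I would extract by verifying separately that connectedness (any two points merely lie in some contractible $U_i / D_i$) and $0$-truncatedness (any loop lifts to some $U_i / D_i$ and is therefore trivial) are each preserved under such colimits. Everything else — phrasing Zorn's lemma in HoTT, and checking that the data of a subgraph forms a set so that Zorn applies on the nose — is routine given AC and LEM.
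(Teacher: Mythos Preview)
Your approach is essentially the paper's: Zorn's lemma on the poset of subtrees, Lemma~\ref{lem:extendsubgraph} to show a maximal subtree must be spanning, and LEM for the decidable-image condition. The one place you diverge is the chain step. The paper does not argue via connectedness plus $0$-truncatedness; instead it directly builds a contraction of $U/D$ by coequalizer induction: fix a basepoint $u$, use the axiom of choice a second time to pick for each vertex $v$ (and each edge $e$) an index $i$ with $u,v \in U_i$ (resp.\ $e \in D_i$), transport the unique path from the contractible $U_i/D_i$, and verify the edge coherences. This is more elementary than your sketch, which relies on the claim that an arbitrary loop in $U/D$ lifts to some $U_i/D_i$ --- a compactness-style statement about filtered homotopy colimits that is not immediate in HoTT and that you would still need to justify.
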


\begin{proof}
  Recall that the axiom of choice implies the law of excluded middle
  and Zorn's lemma.
  
  We consider the set of subgraphs of $E \rightrightarrows V$ that are
  trees, ordered by inclusion.

  We verify that the poset is chain complete. If we are given a chain
  of subgraphs $(D_i \rightrightarrows U_i)_{(i : I)}$, take
  $D \rightrightarrows U$ to be the union of all the subgraphs, and
  write $\iota_i$ for the canonical map $U_i / D_i \rightarrow U /
  D$. Fix $u : U$, noting that such a $u$ merely exists since $I$ is
  merely inhabited\footnote{We follow the convention that chains are
    inhabited.} and $U_i/D_i$ is contractible for each $i$. For
  $v : U$, we choose\footnote{This requires an application of the
    axiom of choice to inhabited subsets of $I$.}
  $i : I$ such that $u, v \in U_i$, and take $p$ to
  be the unique path $[u] = [v]$ in $U_i$. This then gives us a path
  $\iota_i(p)$ in $U / D$. Similarly, for each $e : D$, we can choose
  an element $i : I$ such that $e \in D_i$, and from this construct
  a homotopy between the choice of path from $[u]$ to $[\pi_0(e)]$
  composed with $e$ and the choice of path from $[u]$ to
  $[\pi_1(e)]$. Combined with the induction principle for $U / D$,
  this gives us a path from $[u]$ to $z$ for each $z : U / D$, showing
  that $U / D$ is contractible.
  
  Hence the poset has a maximal element $D \rightrightarrows U$ by
  Zorn's lemma. We wish to show every element of $V$ belongs to
  $U$. By the law of excluded middle, it suffices to derive a
  contradiction from the assumption of $v \in V \setminus U$. However,
  by Lemma ~\ref{lem:extendsubgraph} we could obtain a larger tree
  subgraph, contradicting maximality, as required.

  Again using excluded middle, $D$ has a complement in $E$, giving us
  the spanning tree.
\end{proof}

We next see a key lemma that establishes the geometric realisation of
graphs that have spanning trees are equivalent to bouquets of
circles. The way to visualise this is that we contract the spanning
tree down to a single point. This leaves the remaining edges not in
the spanning tree as loops from this single point to itself.
\begin{lem}
  \label{lem:spanningtreetofreegp}
  Suppose that $E \rightrightarrows V$ is a graph with a spanning tree
  $E_0 \hookrightarrow E_0 + E_1 \simeq E$. Then $V/(E_0 + E_1)$ is
  equivalent to the free $\infty$-group on $E_1$.
\end{lem}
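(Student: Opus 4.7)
The plan is to reduce $V/(E_0 + E_1)$ in a few steps to the coequalizer of $E_1 \rightrightarrows 1$, which is the higher inductive type defining the free $\infty$-group on $E_1$ (a point together with a loop for each element of $E_1$).

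First I would invoke Lemma~\ref{lem:coeqcoprod} to split off the spanning tree edges from the rest, obtaining an equivalence
\begin{equation*}
  V / (E_0 + E_1) \;\simeq\; (V / E_0) / E_1,
\end{equation*}
where on the right $E_1$ acts on $V/E_0$ via composition of its endpoint maps with the quotient map $[-] : V \to V/E_0$. The idea is that the spanning tree is consumed by the first quotient, and the non-tree edges then become loops at the single resulting point.

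Next I would use the spanning tree hypothesis. By definition, the subgraph with edges $E_0$ and vertices $V$ (using that $U \hookrightarrow V$ is an equivalence, so we may identify $U$ with $V$ via the last lemma of Section~\ref{sec:hits}) is a tree, which means $V/E_0$ is contractible and hence equivalent to the unit type $1$. Applying the equivalence-preservation lemma for coequalizers to replace $V/E_0$ by $1$ gives
\begin{equation*}
  (V / E_0) / E_1 \;\simeq\; 1 / E_1,
\end{equation*}
where both endpoint maps $E_1 \rightrightarrows 1$ are forced to be the unique constant map; the compatibility square with the endpoint maps of $E_1$ is automatic, since any two maps into $1$ are equal.

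Finally, I would observe that the coequalizer $1/E_1$ with both endpoint maps constant is, unfolding the higher inductive definition, generated by a single point $[\ast]$ together with a path $\edge(e) : [\ast] = [\ast]$ for each $e : E_1$. This is exactly the generating data of $BF_{E_1}^\infty$, so the two HITs are (definitionally, up to renaming of constructors) equivalent, completing the chain of equivalences from $V/(E_0+E_1)$ to $BF_{E_1}^\infty$. The only step that requires any care is the second, where one has to transport the graph structure across the contraction $V/E_0 \simeq 1$; but this is exactly what the last lemma of Section~\ref{sec:hits} was set up to do, and propositional uniqueness of maps into $1$ makes the commutativity obligations trivial.
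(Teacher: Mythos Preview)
Your proposal is correct and follows exactly the same three-step chain as the paper's proof: apply Lemma~\ref{lem:coeqcoprod} to get $(V/E_0)/E_1$, use contractibility of $V/E_0$ from the spanning-tree hypothesis to reduce to $1/E_1$, and identify this with $BF_{E_1}^\infty$. You spell out more of the bookkeeping (the $U \simeq V$ identification and the triviality of the commuting squares into $1$) than the paper does, but the argument is the same.
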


\begin{proof}
  \begin{align*}
    V/(E_0 + E_1) &\simeq (V / E_0) / E_1 &\text{Lemma \ref{lem:coeqcoprod}} \\
                  &\simeq 1/E_1 &\text{since $E_0$ is a spanning tree} \\
                  &\simeq BF_{E_1}^\infty & & \qedhere
  \end{align*}
\end{proof}

\section{The Nielsen--Schreier Theorem}
\label{sec:niels-schr-theor}

We now prove two versions of the Nielsen--Schreier theorem in HoTT.
Following the classical proofs, we proceed in two steps. We first show
that every subgroup of a free group is equivalent to the geometric
realisation of a graph. We then use the results of Section
~\ref{sec:some-basic-graph} to deduce that it equivalent to a free
group, under certain conditions.

\begin{lem}
  \label{lem:subinftygpisfgpd}
  ``Every bundle on a free $\infty$-group is the geometric
  realisation of a graph.'' Let $A$ be any type, and
  $(B F_A, \base)$ the free $\infty$-group on $A$ generated by paths
  $\fgloop(a)$ for $a : A$. Let $X : B F_A \to \types$ be any family of types
  over $B F_A$. We define a graph
  $\pi_0, \pi_1 : A \times X(\base) \to X(\base)$ by taking $\pi_0$ to
  be projection, and define $\pi_1(a, x)$ to be $\fgloop(a)_\ast(
  x)$. We then have
  \begin{equation*}
    \sum_{z : B F_A^\infty} X(z) \quad\simeq\quad X(\base)/(A \times X(\base))
  \end{equation*}
\end{lem}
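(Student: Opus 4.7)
The key observation is that the free $\infty$-group $BF_A^\infty$ admits a presentation as a coequalizer of the graph $A \rightrightarrows 1$, where both endpoint maps are the unique function into the terminal type. Indeed, the coequalizer $1/A$ is generated by a single point $[\ast]$ together with, for each $a : A$, a path $\edge(a) : [\ast] = [\ast]$, which matches the generators of $BF_A^\infty$ exactly; a routine application of the two universal properties produces the equivalence $BF_A^\infty \simeq 1/A$.

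My plan is to reduce the lemma to Lemma \ref{lem:coeqpb} via this presentation. After transporting $X$ along the equivalence $BF_A^\infty \simeq 1/A$, I would apply Lemma \ref{lem:coeqpb} with $V := 1$, $E := A$, and $X$ playing the role of the dependent family. The lemma then gives
\begin{equation*}
\sum_{z : BF_A^\infty} X(z) \;\simeq\; \sum_{z : 1/A} X(z) \;\simeq\; V'/E',
\end{equation*}
where $V' := \sum_{v : 1} X([v]) \simeq X(\base)$ and $E' := \sum_{a : A} X([\pi_0(a)]) \simeq A \times X(\base)$, using that $\pi_0 : A \to 1$ is constant, so $[\pi_0(a)] = [\ast] = \base$.

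The final step is to check that the endpoint maps $\pi_0', \pi_1' : E' \to V'$ produced by Lemma \ref{lem:coeqpb} agree, under these identifications, with the maps described in the statement. By definition $\pi_0'(a, x) = (\pi_0(a), x)$, which corresponds to $x$ under $V' \simeq X(\base)$, matching the projection $\pi_0$ of the statement; and $\pi_1'(a, x) = \edge(a)_\ast(x)$, which, under the equivalence $BF_A^\infty \simeq 1/A$ identifying $\edge(a)$ with $\fgloop(a)$, becomes $\fgloop(a)_\ast(x)$, matching $\pi_1$.

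The main obstacle is purely bookkeeping: making the presentation of $BF_A^\infty$ as $1/A$ precise enough to invoke Lemma \ref{lem:coeqpb} cleanly, and checking that the induced endpoint maps transport to the ones described in the statement without sign or direction errors. No new topological idea is required beyond recognising the free $\infty$-group as a coequalizer and applying stability of coequalizers under pullback.
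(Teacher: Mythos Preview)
Your proposal is correct and follows essentially the same approach as the paper: present $BF_A^\infty$ as the coequalizer of $A \rightrightarrows 1$, apply Lemma~\ref{lem:coeqpb}, and identify $V' \simeq X(\base)$ and $E' \simeq A \times X(\base)$. Your additional verification that the induced endpoint maps match the stated $\pi_0, \pi_1$ is a detail the paper leaves implicit, so if anything you are slightly more careful.
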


\begin{proof}
  Note that $B F_A$ is equivalent to the coequalizer of a graph
  $A \rightrightarrows 1$ with $\base = [\ast]$ where $\ast$ is the
  unique element of $1$. Hence we can apply Lemma~\ref{lem:coeqpb}
  to express $\sum_{z : B F_A} X(z)$ as a coequalizer $V' /
  E'$. However, we then have the following definitional equalities and
  equivalences.
  \begin{equation*}
    V' \;\equiv\; \sum_{v : 1} X([\ast])
    \;\simeq\; X([\ast])
    \;\equiv\; X(\base)
  \end{equation*}
  \begin{equation*}
    E' \;\equiv\; \sum_{a : A} X([\pi_0(1)])
       \;\equiv\; \sum_{a : A} X([\ast])
       \;\equiv\; \sum_{a : A} X(\base)
       \;\simeq\; A \times X(\base) \qedhere
  \end{equation*}
\end{proof}

In order to derive the truncated version of
Lemma~\ref{lem:subinftygpisfgpd} we first recall the following
flattening lemma for truncation.

\begin{lem}
  \label{lem:truncflatten}
  Suppose we are given a type $Y$ and a family of sets $X : \| Y \|_1 \to \hset$. Then $\sum_{z : \| Y \|_1} X(z) \simeq \| \sum_{y : Y} X(| y |_1) \|_1$.
\end{lem}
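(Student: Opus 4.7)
The plan is to build the equivalence directly, exploiting the fact that the right-hand side is visibly the 1-truncation of something that already maps into the left-hand side. First I would observe that $\sum_{z : \| Y \|_1} X(z)$ is itself 1-truncated: $\| Y \|_1$ is a 1-type by construction and each fibre $X(z)$ is a set, so the total space has h-level at most one. There is a canonical map $\sum_{y : Y} X(| y |_1) \to \sum_{z : \| Y \|_1} X(z)$ sending $(y,x)$ to $(| y |_1, x)$, and since its codomain is a 1-type, the universal property of $\| - \|_1$ yields a well defined $\phi : \| \sum_{y : Y} X(| y |_1) \|_1 \to \sum_{z : \| Y \|_1} X(z)$.

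For the inverse I would construct $\psi$ by currying: producing a map $\sum_{z : \| Y \|_1} X(z) \to \| \sum_{y : Y} X(| y |_1) \|_1$ is the same as producing a dependent function $\prod_{z : \| Y \|_1} (X(z) \to \| \sum_{y : Y} X(| y |_1) \|_1)$. The codomain, for each $z$, is a function type into a 1-type, hence a 1-type, so the induction principle of $\| Y \|_1$ applies; on the generator $| y |_1$ I would send $x : X(| y |_1)$ to $| (y,x) |_1$. This is the step where the hypothesis that $X$ takes values in sets is essentially used, since otherwise the target of the induction would fail to be 1-truncated and additional coherence data would be required.

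Finally, I would check the two homotopies $\phi \circ \psi \sim 1$ and $\psi \circ \phi \sim 1$. Each desired identity lives in a 1-truncated type, so the identity type in question is a set; reducing via $\sum$-elimination and 1-truncation induction leaves only the point cases $| (y,x) |_1$ and $(| y |_1, x)$ respectively, both of which hold by definition. I do not anticipate any real obstacle here --- the main care required is verifying the h-level hypotheses of the induction principles used, which is precisely where the assumption that $X$ is a family of sets pays off.
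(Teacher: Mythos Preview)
Your proof is correct and is precisely the direct argument the paper gestures at with its one-line reference to Lemma~\ref{lem:coeqpb}: construct maps in both directions using the induction principles of $\| - \|_1$ and verify the round-trip identities on point constructors. One small correction to your commentary: the set hypothesis on $X$ is \emph{not} what makes the induction for $\psi$ go through, since a function type into a 1-type is a 1-type irrespective of its domain; the place where that hypothesis is genuinely indispensable is your first step, where it ensures $\sum_{z : \| Y \|_1} X(z)$ is 1-truncated so that $\phi$ can be extracted from the universal property of $\| - \|_1$.
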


\begin{proof}
  Similar to the proof of Lemma~\ref{lem:coeqpb}.
\end{proof}

\begin{lem}
  \label{lem:subgpisfgpd}
  ``Every subgroup of a free group is the geometric realisation of a
  graph.''  Let $A$ be a set, and $(B F_A, \base)$ the free group on
  $A$. Let $X : B F_A \to \hset$
  be a covering space on $B F_A$. Then we then have the following.
  \begin{equation*}
    \sum_{z : B F_A} X(z) \quad\simeq\quad \| X(\base)/(A \times
    X(\base)) \|_1
  \end{equation*}
\end{lem}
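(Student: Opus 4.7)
The plan is to reduce the claim to the $\infty$-group version already proved as Lemma~\ref{lem:subinftygpisfgpd} by applying the truncation flattening lemma, Lemma~\ref{lem:truncflatten}.

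First, recall that by definition $BF_A \equiv \| BF_A^\infty \|_1$. Since the covering space $X$ is a family of sets indexed by this $1$-truncation, we can apply Lemma~\ref{lem:truncflatten} with $Y := BF_A^\infty$ to obtain
\begin{equation*}
  \sum_{z : BF_A} X(z) \quad\simeq\quad \Bigl\| \sum_{y : BF_A^\infty} X(|y|_1) \Bigr\|_1 .
\end{equation*}

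Next, I would define $X' : BF_A^\infty \to \hset$ by $X'(y) := X(|y|_1)$ and apply Lemma~\ref{lem:subinftygpisfgpd} to $X'$. The basepoint satisfies $X'(\base) \equiv X(\base)$, and the generator $\fgloop(a)$ of $BF_A^\infty$ is sent by $|-|_1$ to the corresponding generator of $BF_A$, so transport along $\fgloop(a)$ acting on $X'$ coincides with transport along $\fgloop(a)$ acting on $X$. Hence the graph $A \times X'(\base) \rightrightarrows X'(\base)$ produced by Lemma~\ref{lem:subinftygpisfgpd} is literally the graph named in the statement, and we get
\begin{equation*}
  \sum_{y : BF_A^\infty} X'(y) \quad\simeq\quad X(\base)/(A \times X(\base)) .
\end{equation*}

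Finally, since $1$-truncation preserves equivalences, I would apply $\| - \|_1$ to both sides of this second equivalence and compose with the first, yielding the desired
\begin{equation*}
  \sum_{z : BF_A} X(z) \quad\simeq\quad \| X(\base)/(A \times X(\base)) \|_1 .
\end{equation*}
There is no real obstacle here; the proof is essentially bookkeeping, and the only point requiring care is verifying that the graph produced by the $\infty$-group version of the lemma with the pulled-back family $X'$ agrees on the nose (or up to a transparent identification) with the graph built directly from $X$ on $BF_A$.
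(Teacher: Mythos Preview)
Your proof is correct and follows essentially the same approach as the paper: define $X'$ by precomposing $X$ with the truncation map, apply Lemma~\ref{lem:subinftygpisfgpd} to $X'$, observe that the resulting graph coincides with the one built from $X$, and use Lemma~\ref{lem:truncflatten} to pass between the total space over $BF_A$ and the $1$-truncation of the total space over $BF_A^\infty$. The only cosmetic difference is that the paper defines $X'$ landing in $\types$ (composing further with the projection $\hset \to \types$) to match the hypothesis of Lemma~\ref{lem:subinftygpisfgpd} exactly, and presents the two lemma applications in the opposite order.
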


\begin{proof}
  We define $X' : B F_A^\infty \to \types$ to be the composition of
  $X$ with the truncation map $| - |_1 : B F_A^\infty \to B F_A$ and
  projection from $\hset$ to $\types$. Lemma
  ~\ref{lem:subinftygpisfgpd} then gives us an equivalence
  $\sum_{z : B F_A^\infty} X'(z) \simeq X'(\base)/(A \times
  X'(\base))$. However, $X'(\base)/(A \times
  X'(\base))$ is definitionally equal to $X(\base)/(A \times
  X(\base))$. By Lemma~\ref{lem:truncflatten} we have that
  $\sum_{z : \| B F_A^\infty \|_1} X(z)$ is equivalent to $\| \sum_{z : B
    F_A^\infty} X'(z) \|_1$. Putting these together gives us the required
  equivalence $\sum_{z : \| B F_A^\infty \|_1} X(z) \simeq
  \| X(\base)/(A \times X(\base))\|_1$.
\end{proof}

We now give two versions of the Nielsen--Schreier theorem that hold in
homotopy type theory. The first is entirely constructive and includes
the Nielsen--Schreier index formula.

\begin{thm}
  \label{thm:nsfiniteindex}
  Suppose that $B F_A$ is the free group on a set $A$ with decidable
  equality. Suppose that $X : B F_A \to \hset$ is a finite index,
  connected covering space. Then $\sum_{z : B F_A} X z$ is merely
  equivalent to the classifying space of a free group.

  Moreover, suppose that $A$ is finite of size $n$ and that $X$ is of
  finite index $m$. Then there merely exists an equivalence,
  \begin{equation*}
    \sum_{z : B F_A} X z \quad\simeq\quad B F_{m(n - 1) + 1}
  \end{equation*}
\end{thm}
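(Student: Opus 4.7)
The plan is to chain together the major results of the previous sections. First I apply Lemma~\ref{lem:subgpisfgpd} to rewrite the total space as a truncated coequalizer:
\[
  \sum_{z : B F_A} X(z) \;\simeq\; \| X(\base)/(A \times X(\base)) \|_1.
\]
I then aim to apply Lemma~\ref{lem:spanningtreefinite} to the graph $\pi_0, \pi_1 : A \times X(\base) \rightrightarrows X(\base)$. Its vertex set $X(\base)$ is merely equivalent to an initial segment of $\mathbb{N}$ of size $m$, so is a finite set, and in particular has decidable equality; combined with decidable equality of $A$, this makes the edge set $A \times X(\base)$ a set with decidable equality. The graph is connected, since its geometric realisation $X(\base)/(A \times X(\base))$ has $1$-truncation $\sum_{z : B F_A} X(z)$, which is connected by hypothesis, and a type is connected if and only if its $1$-truncation is.

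Lemma~\ref{lem:spanningtreefinite} then merely produces a spanning tree with edge set $E_0$ finite of size $m - 1$, whose inclusion into $A \times X(\base)$ has decidable image. This decidable image (together with finiteness of $E_0$) gives a splitting $A \times X(\base) \simeq E_0 + E_1$ with $E_1$ the complement. Lemma~\ref{lem:spanningtreetofreegp} yields $X(\base)/(A \times X(\base)) \simeq B F^\infty_{E_1}$, and taking $1$-truncations of both sides gives
\[
  \sum_{z : B F_A} X(z) \;\simeq\; \| B F^\infty_{E_1} \|_1 \;\equiv\; B F_{E_1},
\]
proving the first claim. For the index formula, under the additional assumption $|A| = n$, the edge set $A \times X(\base)$ has size $mn$, and removing the $m - 1$ edges of the spanning tree leaves $|E_1| = mn - (m-1) = m(n-1) + 1$, so $B F_{E_1}$ is merely equivalent to $B F_{m(n-1)+1}$.

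Since the heavy lifting has already been done in Section~\ref{sec:some-basic-graph} and in Lemma~\ref{lem:subgpisfgpd}, no step here is especially delicate; the one point I expect to double-check carefully is that finite index really does force decidable equality on $X(\base)$, as this is what makes the edge type of the associated graph a set with decidable equality and thereby unlocks the constructive spanning-tree lemma. Everything else is assembly.
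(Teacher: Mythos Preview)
Your proposal is correct and follows essentially the same route as the paper: apply Lemma~\ref{lem:subgpisfgpd}, then Lemma~\ref{lem:spanningtreefinite}, then Lemma~\ref{lem:spanningtreetofreegp}, and finally truncate and count. You even supply the connectedness check for the graph (needed to invoke Lemma~\ref{lem:spanningtreefinite}) that the paper leaves implicit, and your flagged concern about decidable equality of $X(\base)$ is exactly the point the paper also relies on without further comment.
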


\begin{proof}
  By Lemma~\ref{lem:subgpisfgpd} we have
  $\sum_{z : BF_A} X(z) \simeq \| X(\base) / (A \times X(\base))
  \|_1$.\footnote{In fact, since $A$ has decidable equality
    $X(\base) / (A \times X(\base))$ is already $1$-truncated, but we
    will not need that here.} Note that
  $X(\base) / (A \times X(\base))$ is a coequalizer where the vertex
  set $X(\base)$ is finite and the edge set $A \times X(\base)$ has
  decidable equality. Hence we can apply Lemma
  ~\ref{lem:spanningtreefinite} to show a spanning tree exists. Hence
  we can apply Lemma~\ref{lem:spanningtreetofreegp} to show
  $X(\base) / (A \times X(\base))$ is equivalent to a free higher
  group. Truncating gives us an equivalence between
  $X(\base) / (A \times X(\base))$ and a free group.

  Now suppose that $A$ is also finite, with $|A| = n$ and
  $|X(\base)| = m$. Then Lemma ~\ref{lem:spanningtreefinite} in fact
  gives us a spanning tree with $m$ vertices and $m - 1$ edges. In
  particular we can write the edge set $A \times X(\base)$ as a
  coproduct $E_0 + E_1$ where $X(\base) / E_0$ is contractible and
  $|E_0| = m - 1$. Lemma~\ref{lem:spanningtreetofreegp} then tells us
  $X(\base) / (A \times X(\base))$ is equivalent to the free (higher)
  group on $E_1$. However $|E_1| = mn - (m - 1) = m(n - 1) + 1$, as
  required.
\end{proof}

\begin{thm}
  \label{thm:nsclassicallogic}
  Assume the axiom of choice.
  Suppose that $B F_A$ is the free group on a set $A$. Let $X :
  B F_A \to \hset$ be any connected covering space. Then $\sum_{z : B F_A}
  X z$ is merely equivalent to a free group.
\end{thm}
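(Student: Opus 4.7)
The plan is to mimic the proof of Theorem~\ref{thm:nsfiniteindex} almost verbatim, substituting Lemma~\ref{lem:spanningtreezl} for Lemma~\ref{lem:spanningtreefinite} as the source of a spanning tree. First I would apply Lemma~\ref{lem:subgpisfgpd} to rewrite the total space as
\[
  \sum_{z : B F_A} X(z) \;\simeq\; \| X(\base)/(A \times X(\base)) \|_1.
\]
The vertex type $X(\base)$ is a set since $X$ lands in $\hset$, and since $A$ is a set the edge type $A \times X(\base)$ is a set too, matching the hypotheses of Lemma~\ref{lem:spanningtreezl} on the underlying graph.

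Next I would check that the graph is connected in the sense of Section~\ref{sec:some-basic-graph}, i.e.\ that $X(\base)/(A \times X(\base))$ is connected as a type. This follows from the connectedness hypothesis on $X$: by the equivalence above the total space is the 1-truncation of the coequalizer, and a type $Y$ is connected if and only if $\| Y \|_1$ is, since both are reflected by the $0$-truncation via $\| Y \|_0 \simeq \| \| Y \|_1 \|_0$. With these hypotheses in place, Lemma~\ref{lem:spanningtreezl} (via the axiom of choice) gives a mere spanning tree, yielding a decomposition $A \times X(\base) \simeq E_0 + E_1$ with $E_0$ spanning.

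Finally I would feed this decomposition into Lemma~\ref{lem:spanningtreetofreegp} to obtain a mere equivalence $X(\base)/(A \times X(\base)) \simeq B F_{E_1}^\infty$, and then 1-truncate both sides. Since $E_1$ is a subtype of the set $A \times X(\base)$ it is itself a set, so $\| B F_{E_1}^\infty \|_1 = B F_{E_1}$ is a free group in the sense of Section~\ref{sec:group-theory-higher}, giving the desired conclusion.

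The main obstacle I expect is not in any individual step but in the bookkeeping around connectedness: one must carefully transport ``connected covering space'' through Lemma~\ref{lem:subgpisfgpd} and confirm that it amounts to ``connected graph'' in the sense used by Lemma~\ref{lem:spanningtreezl}. Everything else is a direct substitution into the template already established by Theorem~\ref{thm:nsfiniteindex}, with the entire classical-logic content absorbed into the invocation of Lemma~\ref{lem:spanningtreezl}.
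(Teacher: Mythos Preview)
Your proposal is correct and follows exactly the approach the paper takes: the paper's own proof reads, in full, ``Similar to the proof of Theorem~\ref{thm:nsfiniteindex} using Lemmas~\ref{lem:subgpisfgpd}, \ref{lem:spanningtreezl} and~\ref{lem:spanningtreetofreegp},'' and your write-up simply unpacks that sentence with the appropriate hypothesis checks. Your extra care in transporting connectedness through the $1$-truncation and verifying that $E_1$ is a set are details the paper leaves implicit.
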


\begin{proof}
  Similar to the proof of Theorem~\ref{thm:nsfiniteindex} using Lemmas
  \ref{lem:subgpisfgpd}, \ref{lem:spanningtreezl} and
  \ref{lem:spanningtreetofreegp}.
\end{proof}

\section{A Boolean $\infty$-Topos where the Theorem does not Hold}
\label{sec:boolean-infty-topos}

We recall that L\"{a}uchli proved the following theorem in
\cite[Section IV]{lauchli}.

\begin{thm}[L\"{a}uchli]
  The Nielsen--Schreier theorem is not provable in $\mathbf{ZFA}$,
  Zermelo-Fraenkel set theory with atoms.
\end{thm}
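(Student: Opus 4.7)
The plan is to apply the Fraenkel--Mostowski permutation method to exhibit a model of $\mathbf{ZFA}$ in which the Nielsen--Schreier theorem fails. First I would fix a countably infinite set $A$ of atoms, let $G$ be the full symmetric group $\operatorname{Sym}(A)$ acting on the cumulative hierarchy over $A$ by extending the action on atoms, and take the normal filter $\mathcal{F}$ of subgroups of $G$ generated by the pointwise stabilisers $\operatorname{Fix}(S) := \{\sigma \in G : \sigma|_S = \mathrm{id}\}$ of finite $S \subseteq A$. The permutation model $\mathcal{N}$ then consists of the hereditarily symmetric sets: those sets whose stabiliser in $G$ lies in $\mathcal{F}$ and whose members are recursively hereditarily symmetric. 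A standard argument, reviewed in Jech's \emph{The Axiom of Choice}, shows $\mathcal{N}$ satisfies $\mathbf{ZFA}$.

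Next, inside $\mathcal{N}$ I would form the free group $F := F(A)$ on the atoms. Because the $G$-action on $A$ extends to $F$ by group automorphisms, $F$ has trivial-support stabiliser and really is free on $A$ in $\mathcal{N}$. I would then identify a subgroup $H \leq F$, defined in a $\operatorname{Sym}(A)$-invariant way, to serve as the non-free subgroup. Following L\"auchli, a natural candidate is a subgroup characterised by a symmetric universal property --- for example a suitable kernel, the commutator subgroup, or a subgroup generated by a symmetric family of conjugates --- that is patently in $\mathcal{N}$ but whose external (i.e.\ ZFC) free basis necessarily depends on choosing an ordering or selection on the generators in $A$.

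The heart of the argument is to show that no free basis for $H$ exists in $\mathcal{N}$. Assume for contradiction that $B \subseteq H$ is such a basis and $B \in \mathcal{N}$; then its stabiliser contains $\operatorname{Fix}(S)$ for some finite $S \subseteq A$. By analysing the action of $\operatorname{Fix}(S)$ on reduced words in $F$, I would exhibit an element $h \in H$ whose unique normal-form expression in terms of $B$ is forced to involve basis elements from arbitrarily large $\operatorname{Fix}(S)$-orbits, contradicting the finite-support hypothesis on $B$. Word-length and cycle-structure invariants of elements of $F$ under $\operatorname{Fix}(S)$ provide the quantitative handle for this step.

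The hard part will be this last step: picking $H$ precisely enough, and carrying out the combinatorial analysis of reduced words, so that the finite-support hypothesis on $B$ really does produce a contradiction. The delicate balance is that $H$ must be symmetric enough to belong to $\mathcal{N}$, yet structured enough that any putative free basis is forced to make an essentially non-symmetric choice; this is exactly where L\"auchli's careful combinatorial bookkeeping with reduced words in $F$ enters, and it is the only part of the plan that is not a mechanical application of the FM-model machinery.
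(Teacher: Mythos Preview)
Your Fraenkel--Mostowski setup is correct and is exactly L\"auchli's framework. Note, though, that the paper does not itself prove this theorem: it cites L\"auchli, and only recalls from his paper the one combinatorial lemma that drives the argument. That lemma pins down both the subgroup and the contradiction mechanism precisely, and the mechanism differs from what you sketch.

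Specifically, L\"auchli takes $H = C$ to be the subgroup of $F_A$ generated by the commutators $aba^{-1}b^{-1}$ for $a,b \in A$ (so one of your listed candidates). The crucial lemma, which the paper states but also defers to L\"auchli for the proof, is: \emph{no} free generating set of $C$ is setwise invariant under \emph{any} transposition $(a\;b)$ with $a \neq b$. Once this is in hand the permutation-model argument is immediate: if $B$ freely generates $C$ and has finite support $S$, choose distinct $a,b \in A \setminus S$; then $(a\;b) \in \operatorname{Fix}(S)$ fixes $B$ setwise, contradicting the lemma.

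Your proposed contradiction mechanism --- exhibiting some $h \in H$ whose $B$-expression involves basis elements from ``arbitrarily large $\operatorname{Fix}(S)$-orbits'' --- does not work as stated. Finite support of $B$ says only that $B$ is setwise $\operatorname{Fix}(S)$-invariant; it imposes no bound on the sizes of $\operatorname{Fix}(S)$-orbits inside $B$, so large orbits produce no contradiction. (And a single fixed $h$ has a finite $B$-word, so ``arbitrarily large'' cannot refer to a single $h$ either.) The actual obstruction is sharper and cleaner: one transposition outside the support already cannot fix any free basis of $C$. That single-transposition statement is the entire combinatorial core, and it is the only non-routine step.
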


We will sketch out how to adapt the proof to obtain a model of
homotopy type theory with excluded middle where the Nielsen--Schreier
theorem does not hold. In order to do this we will work in a classical
metatheory and we will switch back to
the classical definition of free groups using reduced words. In the
presence of the law of excluded middle this is equivalent to our
earlier definition by \cite[Section 2.2]{krausfreehighergps}.

We first recall the following lemma from L\"{a}uchli's proof.

\begin{lem}
  \label{lem:lauchlilemma}
  Let $F_A$ be a free group and $C \leq F_A$ the subgroup generated by
  elements of the form $a b a^{-1} b^{-1}$ for $a, b \in A$. Let
  $X \subseteq C$ be a set that freely generates $C$. Then $X$ cannot
  be invariant under any transposition $(a \; b)$ for
  $a \neq b \in A$.
\end{lem}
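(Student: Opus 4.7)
My plan is to produce a group homomorphism $\phi \colon C \to \mathbb{Z}$ satisfying $\phi([a,b]) = 1$ and $\phi \circ \sigma = -\phi$, where $\sigma$ is the automorphism of $F_A$ extending the transposition $(a\;b)$. Given any $\sigma$-invariant free basis $X$ of $C$, a short parity computation in the abelianization $C^{ab}$ will then produce a contradiction with $\phi([a,b]) = 1$.

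To construct $\phi$ I would avoid working inside $C$ directly (since the internal structure of $C$ is precisely what we are trying to constrain) and instead use the ambient inclusion $C \hookrightarrow F_A$ together with the retraction $\pi \colon F_A \to F_{\{a,b\}}$ sending $a \mapsto a$, $b \mapsto b$ and every other generator of $F_A$ to the identity. Every basic commutator $[c,d]$ with $\{c,d\} \neq \{a,b\}$ is sent by $\pi$ to the identity, so $\pi(C) \subseteq \langle [a,b] \rangle$, which is an infinite cyclic subgroup of $F_{\{a,b\}}$. Composing with the isomorphism $\langle [a,b] \rangle \cong \mathbb{Z}$ sending $[a,b] \mapsto 1$ yields the desired $\phi$. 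Since $\pi$ intertwines $\sigma$ with the swap automorphism of $F_{\{a,b\}}$, and the latter inverts $[a,b]$, one checks $\phi \circ \sigma = -\phi$ on the generators of $C$, hence on all of $C$.

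Now suppose for contradiction that $X$ freely generates $C$ with $\sigma(X) = X$. Then $\sigma$ induces a permutation of $X$, hence acts as a permutation on the basis of the free abelian group $C^{ab} \cong \mathbb{Z}^X$. Expand $[a,b] = \sum_{x \in X} c_x \cdot x$ in $C^{ab}$ and set $n_x := \phi(x)$. From $\sigma_\ast([a,b]) = -[a,b]$ we read off $c_{\sigma(x)} = -c_x$, and from $\phi \circ \sigma = -\phi$ we read off $n_{\sigma(x)} = -n_x$; in particular both $c_x$ and $n_x$ vanish on every $\sigma$-fixed $x$. Computing $\phi([a,b]) = \sum_{x} c_x n_x$ by grouping the sum along $\sigma$-orbits, every fixed $x$ contributes $0$, while each two-element orbit $\{x,x'\}$ contributes $c_x n_x + (-c_x)(-n_x) = 2 c_x n_x$. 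Hence $\phi([a,b])$ is even, contradicting $\phi([a,b]) = 1$.

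The main obstacle is producing the homomorphism $\phi$ without circularly presupposing a free generating set of $C$; the retraction $\pi$ circumvents this by landing $C$ inside a concretely known cyclic subgroup of the smaller free group $F_{\{a,b\}}$, after which only an elementary parity argument is needed.
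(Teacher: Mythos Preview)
The paper does not give its own proof of this lemma; it simply refers the reader to L\"auchli's original paper. Your argument is correct and self-contained: the retraction $\pi \colon F_A \to F_{\{a,b\}}$ indeed lands $C$ inside the infinite cyclic group $\langle [a,b] \rangle$ (each generating commutator $[c,d]$ with $\{c,d\} \neq \{a,b\}$ has at least one letter sent to the identity by $\pi$), the intertwining $\pi \circ \sigma = \tau \circ \pi$ on generators of $F_A$ yields $\phi \circ \sigma = -\phi$ on $C$, and the parity computation in the free abelian group $C^{\mathrm{ab}} \cong \mathbb{Z}^{(X)}$ goes through exactly as you describe, forcing $\phi([a,b]) \in 2\mathbb{Z}$ in contradiction with $\phi([a,b]) = 1$. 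Whether or not this coincides with L\"auchli's original argument, it is a clean and elementary route to the result.
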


\begin{proof}
  See \cite[Section IV]{lauchli}.
\end{proof}

\newcommand{\names}{\mathbb{A}}
\newcommand{\perma}{\operatorname{Perm}(\names)}

We will construct our example using the ($1$-)topos of \emph{nominal
  sets}\cite{pittsnomsets}. We first recall the basic definitions. We
fix a countably infinite set $\names$. We
write $\perma$ for the group of finitely supported permutations of
$\names$ (i.e. $\pi : A \stackrel{\sim}{\longrightarrow} A$ such that
$\pi(a) = a$ for all but finitely many $a \in \names$).

If $(X, \cdot)$ is a $\perma$-set, $x \in X$ and $A \subseteq \names$,
we say $A$ is a \emph{support} for $x$ if $\pi \cdot x = x$ whenever
$\pi \in \perma$ satisfies $\pi(a) = a$ for all $a \in A$.

The topos of nominal sets is defined to be the full subcategory
of $\perma$-sets consisting of $(X, \cdot)$ such that every element $x
\in X$ has a finite support.

\begin{thm}
  The Nielsen--Schreier theorem is false in the internal logic of the
  topos of nominal sets.
\end{thm}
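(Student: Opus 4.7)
The plan is to adapt L\"{a}uchli's construction directly, exploiting the fact that the topos of nominal sets is a boolean $1$-topos in which the internal free group on $\names$ (viewed as a nominal set with its tautological action) is externally just the classical free group $F_\names$ equipped with the permutation action of $\perma$ on its free generators. The commutator subgroup $C \leq F_\names$ of Lemma~\ref{lem:lauchlilemma} is manifestly $\perma$-invariant, so it determines an internal subgroup of $F_\names$; an internal Nielsen--Schreier theorem must therefore furnish an internal free basis for $C$, and the whole argument consists in showing that no such basis can exist.

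Concretely, I would first assume the theorem holds internally and apply it to the covering space corresponding to $C$, obtaining the internal mere existence of a subset $X \subseteq C$ freely generating $C$. Translating this internal existential into external data via the Kripke--Joyal semantics of the topos of nominal sets yields an inhabited nominal set $U$ (an epimorphism $U \to 1$) together with an equivariant subobject $X \hookrightarrow U \times C$ whose fibre $X_u \subseteq C$ freely generates $C$ externally for every $u \in U$. Next I would pick any $u \in U$, which by definition of nominal set admits a finite support $S_u \subseteq \names$, and observe that equivariance of $X$ forces $\pi \cdot X_u = X_{\pi \cdot u} = X_u$ for every $\pi \in \perma$ fixing $S_u$ pointwise. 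Since $\names$ is infinite and $S_u$ finite, there exist distinct $a, b \in \names \setminus S_u$, and the transposition $(a\;b)$ preserves $X_u$ setwise, directly contradicting Lemma~\ref{lem:lauchlilemma}.

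The main obstacle I anticipate is the translation step: one needs to verify that ``$X$ freely generates $C$'' interpreted in the topos really does produce, on each fibre $X_u$, a classical free basis of the external group $C$. In a boolean $1$-topos this should reduce to checking that the classical algebraic definition of free group (which by \cite[Section 2.2]{krausfreehighergps} agrees with the HoTT definition under excluded middle) is first-order expressible, so that its validity at the generic point is preserved fibrewise. A secondary bookkeeping issue is that the covering space on $B F_\names$ corresponding to $C$ has total space classically equivalent to $F_\names / C$ with its natural action, so that the internal subgroup one obtains is indeed the intended $C$; this is standard equivariant covering-space theory in a boolean setting and should not present difficulties.
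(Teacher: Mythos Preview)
Your proposal is correct and follows essentially the same argument as the paper: both exploit that the internal free group on $\names$ is the external one with its permutation action, pass to the commutator subgroup $C$, extract from internal Nielsen--Schreier a free generating set $X \subseteq C$ with finite support, and then apply Lemma~\ref{lem:lauchlilemma} to a transposition of two atoms outside that support. The only cosmetic difference is that the paper bypasses the Kripke--Joyal machinery by directly describing the nominal set $\mathcal{G}$ of finitely-supported free generating subsets of $C$ and observing that internal inhabitedness of $\mathcal{G}$ amounts to external nonemptiness (since in nominal sets a map to $1$ is epi iff the domain is nonempty), so your cover $U$ and fibre $X_u$ collapse to simply picking an element $X \in \mathcal{G}$.
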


\begin{proof}
  First recall that $\names$ can itself be viewed as a nominal set by
  taking the action to be $\pi \cdot a := \pi(a)$. Take $F_\names$ to
  be the internal free group on $\names$. Observe, e.g. by verifying
  the universal property that $F_\names$ is just the external
  definition of $F_\names$ together with the action given by the
  action on $\names$ and the universal property.

  We define $C \leq F_\names$ by externally taking it to be the
  subgroup generated by elements of the form $a b a^{-1} b^{-1}$ for
  $a, b \in \names$. We observe that the action of $\perma$ on
  $F_\names$ restricts to $C$, giving us a subgroup $C \leq F_\names$
  in nominal sets. The object of freely generating subsets,
  $\mathcal{G}$ of $C$ can be explicitly described as the set of
  $X \subseteq C$ such that $X$ has finite support and (externally)
  freely generates $C$, with the obvious action.
  The Nielsen--Schreier theorem implies that
  $\mathcal{G}$ contains some element $X$. Let $A \subseteq \names$ be
  a finite support for $X$. Let $a, b$ be distinct elements of $\names
  \setminus A$. Then $(a \; b) \cdot X = X$, contradicting Lemma
  ~\ref{lem:lauchlilemma}.
\end{proof}

\begin{cor}
  There is a boolean $\infty$-topos where the Nielsen--Schreier theorem
  does not hold.

  The Nielsen--Schreier theorem is not provable in homotopy type
  theory, even with the addition of the law of excluded middle.
\end{cor}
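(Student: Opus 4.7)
The plan is to package the preceding failure of Nielsen--Schreier in nominal sets into a model of HoTT plus LEM. First I would note that the 1-topos of nominal sets is already boolean: its subobject classifier is the two-element set with trivial $\perma$-action, so excluded middle holds in its internal logic. Then I would construct an $\infty$-topos $\mathcal{X}$ whose underlying 1-topos of $0$-truncated objects is the nominal sets. A convenient choice is the $\infty$-topos of $\infty$-sheaves (equivalently, simplicial sheaves up to local weak equivalence) on the Schanuel site of finite subsets of $\names$ and injections between them. Booleanness transfers because it is a property of $(-1)$-truncated objects, which $\mathcal{X}$ inherits from its underlying 1-topos. By Shulman's interpretation of HoTT with univalence in any Grothendieck $\infty$-topos, $\mathcal{X}$ then models HoTT plus LEM.

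It remains to show the internal Nielsen--Schreier theorem fails in $\mathcal{X}$. Under the dictionary between HoTT and $\infty$-topos semantics, the free group $BF_A$ on a set $A$ corresponds externally to the classifying space of the external free group on $A$ as a group object in nominal sets, and pointed connected covering spaces on $BF_A$ correspond to subgroups of $F_A$ in nominal sets. Applied to $A := \names$ and to the covering space associated with the subgroup $C \leq F_\names$ from the previous theorem, the internal Nielsen--Schreier theorem would produce a finitely supported set $S$ freely generating $C$ in nominal sets, directly contradicting Lemma~\ref{lem:lauchlilemma} exactly as in the previous proof. The second sentence of the corollary is then immediate from soundness of HoTT in $\mathcal{X}$.

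The main obstacle is technical rather than conceptual: one must pin down the $\infty$-topos $\mathcal{X}$ with enough structure to invoke Shulman's strict univalent universes, and then verify that the HoTT-internal notions of free group (as the $1$-truncation of a wedge of circles), covering space (as a map into $\hset$), and connectedness unfold under the interpretation to precisely the external group-theoretic notions refuted in the preceding theorem. Once this translation is in place, the corollary follows from the theorem together with soundness of the model.
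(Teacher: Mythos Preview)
Your proposal is correct and follows essentially the same approach as the paper: identify nominal sets with the Schanuel topos, pass to an enveloping boolean Grothendieck $\infty$-topos, invoke Shulman's interpretation of HoTT there, and conclude by soundness. The only cosmetic difference is that the paper appeals to Lurie's general existence result (\cite[Proposition 6.4.5.7]{luriehtt}) for an $\infty$-topos with the given $1$-topos as its $0$-truncated part, whereas you name a concrete candidate ($\infty$-sheaves on the Schanuel site); you are also more explicit than the paper about the translation step between the HoTT-internal statement and the external group-theoretic one, which the paper leaves implicit.
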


\begin{proof}
  The topos of nominal sets is equivalent to a Grothendieck topos
  referred to as the \emph{Schanuel topos} \cite[Section
  6.3]{pittsnomsets}.

  Lurie showed in \cite[Proposition 6.4.5.7]{luriehtt} that any
  Grothendieck ($1$-)topos is equivalent to the $0$-truncated elements
  of some Grothendieck $\infty$-topos. In particular, if we apply this
  to the topos of nominal sets we obtain a boolean Grothendieck
  $\infty$-topos where Nielsen--Schreier does not hold.

  Shulman proved in \cite{shulmaninftytopunivalence} that homotopy
  type theory can be interpreted in any Grothendieck
  $\infty$-topos. We thereby obtain a model of HoTT with the law of
  excluded middle where Nielsen--Schreier does not hold.
\end{proof}

\begin{rem}
  As an alternative to the non constructive methods of Lurie and
  Shulman, it may also be possible to use a cubical sheaf model, as
  developed by Coquand, Ruch and Sattler \cite{coquandruchsattler},
  but we leave a proof for future work.
\end{rem}

\section{The Untruncated Nielsen--Schreier Theorem is False}
\label{sec:untr-niels-schr}

In this section we will again work with the more usual definition of group
as set with a binary operation satisfying the well known axioms. We
will write the free group as $F_A$, which is the loop space of $B F_A$
at $\base$. We recall the observation of Kraus and Altenkirch
\cite[Section 2.2]{krausfreehighergps} that when $A$ has decidable
equality we can use the classical description of $F_A$ as the set of
reduced words with multiplication given by concatenation followed by
reduction. In particular we note that when $A$ has decidable equality
so does $F_A$.

In Theorems \ref{thm:nsfiniteindex} and \ref{thm:nsclassicallogic} we
were careful to state that the equivalences merely exist, to emphasise
that formally we are only constructing an element of the truncation
$\| \sum_{z : B F_A} X z \;\simeq\; B F_B \|$, not the type $\sum_{z :
  B F_A} X z \;\simeq\; B F_B$ itself. In this section we will see why
the distinction is important. To understand this, we first define the
following two variants of the Nielsen--Schreier theorem.

\begin{defi}
  We say the \emph{untruncated Nielsen--Schreier theorem} holds if for
  each set $A$ and each subgroup $H \hookrightarrow F_A$ we can choose
  a subset $C_{A, H} \hookrightarrow H$ that freely generates $H$,
  i.e. the lift $F_{C_{A, H}} \to H$ given by universal property is
  an isomorphism.
\end{defi}

\begin{defi}
  We say the \emph{equivariant Nielsen--Schreier theorem} holds if for
  each set $A$ and each subgroup $H \hookrightarrow F_A$ we can choose
  a subset $C_{A, H} \hookrightarrow H$ that freely generates $H$
  satisfying the following condition. Given any equivalence
  $\pi : A \to A'$ we write $\tilde{\pi}$ for the lift to an
  isomorphism $F_A \cong F_{A'}$ given by universal property. We
  require that $C_{A', \tilde{\pi}(H)} = \tilde{\pi}(C_{A, H})$.
\end{defi}

\begin{lem}
  \label{lem:untruncimpliesequivariant}
  The untruncated Nielsen--Schreier theorem implies the equivariant
  Nielsen--Schreier theorem.

  Moreover, the untruncated Nielsen--Schreier theorem restricted to
  free groups generated by merely finite sets implies the equivariant
  Nielsen--Schreier theorem with the same restriction.
\end{lem}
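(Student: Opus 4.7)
The plan is to use univalence and path induction to reduce the equivariant condition to a trivial functoriality check. An equivalence $\pi : A \simeq A'$ induces by univalence a path $p : A = A'$, and the universal-property lift $\tilde{\pi} : F_A \cong F_{A'}$ agrees with the transport of the type family $B \mapsto F_B$ along $p$; correspondingly, the subgroup $\tilde{\pi}(H) \hookrightarrow F_{A'}$ is the transport of $H \hookrightarrow F_A$ along $p$. So the ordered pair $(A', \tilde{\pi}(H))$ is exactly the transport of $(A, H)$ in the type family $B \mapsto \{\text{subgroups of } F_B\}$.

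The untruncated Nielsen--Schreier theorem supplies a dependent function $C$ sending each pair $(A, H)$, with $A$ a set and $H \hookrightarrow F_A$ a subgroup, to a freely generating subset $C_{A,H} \subseteq H$. Because $C$ is a genuine dependent function in the type theory, it respects identifications: applying $\ap_C$ to $p$ (or equivalently, doing based path induction on $p$) reduces the claim to the degenerate case $A \equiv A'$, $\pi = \mathrm{id}$, where $\tilde{\pi}$ is the identity on $F_A$, $\tilde{\pi}(H) = H$, and the required equation $C_{A', \tilde{\pi}(H)} = \tilde{\pi}(C_{A, H})$ collapses to reflexivity. For the restricted version the same argument applies verbatim, since mere finiteness is preserved by equivalences, so $C_{A', \tilde{\pi}(H)}$ is defined whenever $C_{A, H}$ is, and path induction proceeds without modification.

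The main potential obstacle, and essentially the only nontrivial point, is verifying that the universal-property lift $\tilde{\pi}$ really coincides with the transport of the free group construction along the univalence-induced path, so that the stated equation is precisely what $\ap_C$ produces. This is a routine consequence of the uniqueness clause of the universal property of $F_A$, but one must also be careful about universe size so that $C$ can be quantified over ``all sets $A$'' as an actual dependent function of the type theory, rather than just a family of existence claims.
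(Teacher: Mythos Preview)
Your proposal is correct and follows essentially the same approach as the paper: convert the equivalence $\pi$ to a path via univalence, then use path induction so that the equivariance condition reduces to the trivial case $\pi = \mathrm{id}$. The paper phrases the restricted case slightly differently---it notes that the projection from merely finite sets to sets is an embedding, so any equivalence still arises from a path in the subtype---but this is equivalent to your observation that mere finiteness is preserved under equivalences (both amount to the fact that mere finiteness is a proposition).
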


\begin{proof}
  Given sets $A, A'$ and a path $p : A = A'$, write $\coe(p)$ for the
  equivalence $A \to A'$ given by transport. It is straightforward to
  show by path induction that
  $C_{A', \widetilde{\coe(p)}(H)} = \widetilde{\coe(p)}(C_{A,
    H})$. However, by univalence and the fact that the projection from
  $\hset$ to $\types$ is an embedding, for every equivalence
  $\pi : A \simeq A'$, there is a unique path $p : A = A'$ such that
  $\pi = \coe(p)$, so this is true for all equivalences.

  Note that the projection from merely finite sets to sets is an
  embedding, so it remains true for merely finite $A, A'$ that any
  $\pi : A \simeq A'$ can be written uniquely as $\coe(p)$ for some $p
  : A = A'$.
\end{proof}

At this point it is possible to apply Lemma~\ref{lem:lauchlilemma} to
show the equivariant Nielsen--Schreier theorem is false. However, for
the theorem below we will use an alternative proof, for two
reasons. Firstly, we will give a stronger result that the equivariant
version of the theorem fails already for the case of finite index
subgroups of finitely generated free groups, whereas in L\"{a}uchli's
example the free group is infinitely generated, and we can see by
Theorem~\ref{thm:nsfiniteindex} the given subgroup must have infinite
index. Secondly, we can use an easier, more straightforward argument,
since to find a counterexample to equivariance, we only need to show
any set of generators is not fixed by some transposition, rather than
the stronger result that any generating set is not fixed by any
non-trivial transposition. In fact we will see that one of the very simplest
instances of the Nielsen--Schreier theorem suffices to find a
counterexample.

\begin{thm}
  \label{thm:eqnsfalse}
  The equivariant Nielsen--Schreier theorem is false in homotopy type
  theory, extensional type theory and classical mathematics.
  Moreover, it remains false if we restrict to finite index
  subgroups of finitely generated free groups.
\end{thm}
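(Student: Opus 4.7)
The plan is to exhibit a single elementary counterexample, already at the level of an index-$2$ subgroup of the free group on a two-element set, rather than passing through Lemma~\ref{lem:lauchlilemma}. Concretely, I would take $A := \{a, b\}$ (which has decidable equality, so Section~\ref{sec:untr-niels-schr} gives a reduced-words description of $F_A$) and let $H \hookrightarrow F_A$ be the kernel of the homomorphism $F_A \to \mathbb{Z}/2\mathbb{Z}$ sending both generators to $1$, that is, the subgroup of reduced words of even length. Writing $\sigma : A \to A$ for the transposition and $\tilde{\sigma} : F_A \cong F_A$ for the induced automorphism, the fact that $\tilde{\sigma}$ preserves word length gives $\tilde{\sigma}(H) = H$; so if the equivariant Nielsen--Schreier theorem held, $C_{A, H} \hookrightarrow H$ would be a $\tilde{\sigma}$-invariant free generating set of $H$.

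I would then invoke Theorem~\ref{thm:nsfiniteindex} with $n = 2$ and $m = 2$ to conclude that $H$ is free of rank $m(n-1)+1 = 3$, and deduce by a standard abelianization argument that any free generating set of $H$ has exactly three elements, so $|C_{A, H}| = 3$. The crux is the lemma that the $\tilde{\sigma}$-fixed subgroup of $F_A$ is trivial. To see this, I would take an element in its reduced form $w = x_1 x_2 \cdots x_k$ with $x_i \in \{a, a^{-1}, b, b^{-1}\}$ and observe that, because $\sigma$ acts as a bijection on letters commuting with inversion, $\tilde{\sigma}(w) = \sigma(x_1) \sigma(x_2) \cdots \sigma(x_k)$ is again in reduced form; uniqueness of reduced forms then forces $\sigma(x_i) = x_i$ for every $i$, which is impossible since $\sigma$ has no fixed letters.

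To finish, I would note that the order-$2$ group $\langle \tilde{\sigma}\rangle$ acts on the three-element set $C_{A, H}$, so by a parity argument some element must be fixed; this contradicts the preceding lemma, since the identity cannot belong to a free generating set. The entire derivation relies only on Theorem~\ref{thm:nsfiniteindex} and elementary manipulation of reduced words, so it applies uniformly in HoTT, extensional type theory and classical mathematics, and the restriction to finite index subgroups of finitely generated free groups is already met by the counterexample. The main place requiring care is the invariance of rank for finitely generated free groups, which I would obtain from the abelianization step $F_k \cong H \cong F_3 \Rightarrow \mathbb{Z}^k \cong \mathbb{Z}^3$, valid in each of the three target theories.
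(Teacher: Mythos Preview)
Your proposal is correct and follows the paper's proof almost exactly: same two-element $A$, same index-$2$ kernel of the map to $\ZZ/2\ZZ$, same transposition, same rank-$3$ count via the index formula, same parity/fixed-point argument on a three-element set, and same reduced-words check that only the identity of $F_A$ is fixed by $\tilde{\sigma}$. The one place where the paper is more explicit is in first arguing, constructively, that $C_{A,H}$ is \emph{finite} (a finite subset of $C_{A,H}$ already suffices to generate $g_1,g_2,g_3$ and hence $H$, then freeness and decidable equality of $F_A$ force $C_{A,H}$ itself to be finite) before comparing ranks---you tacitly assume this by writing $F_k$ with $k$ a natural number, so since you already flagged this step as the one requiring care, just fill it in.
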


\begin{proof}
  We will given an example of a finite index subgroup of a finitely
  generated free group $F_A$ that does not satisfy the equivariance
  condition for all automorphisms $\pi : A \simeq A$.
  
  We take $A$ to be a type with exactly two elements that we write as
  $a$ and $b$. Note that we have a unique homomorphism
  $\theta : F_A \to \ZZ / 2\ZZ$ that sends both $a$ and $b$ to $1$. We
  take $H \leq F_A$ to be the kernel of $\theta$. We first observe
  that $H$ has index $2$, since for homomorphisms $\theta$ in general,
  the index of $\ker(\theta)$ is equal to the size of the image of
  $\theta$, and in this case $\theta$ is surjective and
  the codomain has size $2$.

  Take $\pi$ to be the transposition $(a \; b)$. By the assumption of
  equivariant Nielsen--Schreier we have $C_{A, H}$ such that
  $\tilde{\pi}(C_{A, H}) = C_{A, \tilde{\pi}(H)}$. However, note that
  $\theta(\tilde{\pi}(g)) = \theta(g)$ for all $g \in F_A$, since this is
  clearly true whenever $g \in A$. Hence
  $\tilde{\pi}(H) = H$. It follows that
  $\tilde{\pi}(C_{A, H}) = C_{A, H}$.

  We next show that $C_{A, H}$ has exactly $3$ elements. Certainly $H$
  can be freely generated by a (non-equivariant) set of $3$ elements
  by the Nielsen--Schreier theorem, say $g_1, g_2, g_3$.\footnote{In
    fact we can read off from the proof an explicit set
    $\{a^2, ab, ab^{-1}\}$.}  We only need finitely many elements of
  $C_{A, H}$ to generate each of $g_1, g_2, g_3$, and so a finitely
  enumerable subset\footnote{That is, the surjective image of an
    initial segment of $\mathbb{N}$.} of $C_{A, H}$
  generates all of $H$. By freeness and the fact that $F_A$ has
  decidable equality, it follows $C_{A, H}$ itself is finitely
  enumerable. Again using that $F_A$ has decidable equality it follows
  that $C_{A, H}$ is finite, i.e. in bijection with an initial
  segment of $\mathbb{N}$. One can then show $C_{A, H}$ has
  the same size as any other finite set that freely generates $H$
  using standard arguments from algebra such as \cite[Proposition
  5.75]{rotmanama}, which is constructively valid as stated. Hence
  $|C_{A, H}| = 3$.

  We can now deduce that $\tilde{\pi}$ permutes the set with three
  elements $C_{A, H}$. However, it has order $2$, and any permutation
  of $3$ elements of order $2$ has a fixed point. We deduce that
  $\tilde{\pi}(h) = h$ for some $h \in C_{A, H}$. But, using the
  explicit description of $F_A$ in terms of reduced words, the only
  element of $F_A$ fixed by $\tilde{\pi}$ is the identity, which
  cannot appear in any free generating set, giving a contradiction.
\end{proof}

\begin{cor}
  The untruncated Nielsen--Schreier theorem is false in homotopy type
  theory. Moreover, it remains false if we restrict to finite index
  subgroups of finitely generated free groups.
\end{cor}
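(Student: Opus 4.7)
The plan is to contrapose Lemma~\ref{lem:untruncimpliesequivariant} against Theorem~\ref{thm:eqnsfalse}, since between them these two results already contain all of the necessary content. The work is essentially bookkeeping: I only need to verify that the restricted forms of the two statements match up.

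First I would invoke the second half of Lemma~\ref{lem:untruncimpliesequivariant}, which says that if the untruncated Nielsen--Schreier theorem holds for free groups on merely finite sets, then for every such $A$ and every subgroup $H \leq F_A$ the chosen free generating set $C_{A,H}$ automatically satisfies the equivariance condition with respect to every equivalence $\pi : A \simeq A'$ of merely finite sets. Then I would appeal to Theorem~\ref{thm:eqnsfalse}, which exhibits a two-element set $A = \{a, b\}$, the finite-index subgroup $H = \ker(\theta) \leq F_A$ where $\theta : F_A \to \ZZ/2\ZZ$ sends both generators to $1$, and the transposition $\pi = (a\;b)$, for which no equivariant free generating set of $H$ can exist. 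Combining these gives the corollary in its restricted form, which a fortiori implies the unrestricted form.

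The only point that requires any care is the match between the hypotheses of the lemma and the example of the theorem. Both the set $A$ and its image under $\pi$ in Theorem~\ref{thm:eqnsfalse} are actually finite (hence merely finite) and the subgroup $H$ is of finite index in a finitely generated free group, so every restriction of the lemma and every restriction of the theorem lines up exactly. There is no real obstacle beyond recording this observation.
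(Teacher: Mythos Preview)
Your proposal is correct and follows exactly the same route as the paper: apply the contrapositive of Lemma~\ref{lem:untruncimpliesequivariant} (in its restricted form) to Theorem~\ref{thm:eqnsfalse}. The paper's proof is a single sentence to this effect; your additional remarks about matching up the restrictions are sound and simply make explicit what the paper leaves implicit.
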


\begin{proof}
  By Lemma~\ref{lem:untruncimpliesequivariant} it suffices to show the
  equivariant Nielsen--Schreier theorem in finite index subgroups of
  finitely generated free groups is false, which was Theorem
  ~\ref{thm:eqnsfalse}.
\end{proof}

\section{Conclusion}
\label{sec:conclusion}

We have given a proof of two versions of the Nielsen--Schreier theorem
in homotopy type theory. The proof of the finite index version,
Theorem~\ref{thm:nsfiniteindex} has been verified electronically in
the Agda proof assistant using the HoTT Agda library
\cite{hottagda}. Since we only used axioms that are always present in
homotopy type theory, the formal proof holds in a wide variety of
models. Alternatively, we could have used the new cubical mode now
available in Agda \cite{cubicalagda}. In this case the class of models
is currently limited to those following the Orton-Pitts approach
\cite{pittsortoncubtopos}, but some of the proofs would have been a
bit easier (see below).

In the proof we made good use of concepts in HoTT to transfer ideas
from the topological proof into type theory. In particular we used the
coequalizer higher inductive type to represent the geometric
realisation of graphs and (in particular) free groups. This approach
was even useful in the relatively concrete construction of spanning
trees in Section~\ref{sec:some-basic-graph}. We defined trees and
connected graphs by applying the existing concepts of contractibility
and connectedness of types to the coequalizer of the graph. We were
able to work directly with these definitions throughout, without ever
needing to define or use the more usual notion of path in a graph as a
finite sequence of edges, which was an advantage in the electronic
formalisation.

One minor difficulty in the formal proof is the lemmas on coequalizers
appearing in Section~\ref{sec:hits}, in particular the higher paths
that appear when constructing the equality part of an equivalence by
coequalizer induction. However, many of these difficulties could have been
eliminated by instead using cubical mode. This allows one
to define coequalizers using the usual Agda data syntax, to use
pattern matching instead of elimination terms and it makes
$\beta$-reduction for path constructors definitional, the latter
playing a useful role in these proofs particularly. To demonstrate
this a formalisation of some of the results of Section~\ref{sec:hits} using
cubical mode has been added in a separate directory.

The independence result in Section~\ref{sec:boolean-infty-topos} shows
that the axiom of choice is necessary for the main theorem. It also
demonstrates that independence results in HoTT can now be relatively
straightforward thanks to Shulman's interpretation of HoTT in a
Grothendieck $\infty$-topos \cite{shulmaninftytopunivalence}, Lurie's
construction of enveloping $\infty$-toposes \cite{luriehtt} and the
existing body of work on Grothendieck toposes, in this case the use of
nominal sets to provide simpler categorical versions of proofs using
Fraenkel-Mostowski models.

Finally the result in Section~\ref{sec:untr-niels-schr} provides a
simple example of an important concept in HoTT and its relevance to the
Nielsen--Schreier theorem: mathematics in the presence of univalence is
``inherently equivariant.''

\bibliographystyle{alphaurl}
\bibliography{mybib}{}

\end{document}